\documentclass[11pt]{amsart}
%%%PACKAGES
\usepackage[margin=1in]{geometry}
\geometry{letterpaper}
\usepackage{graphicx}
\usepackage{comment}
\usepackage{float}
\usepackage{amssymb,amsmath}
\usepackage{epsfig,subfigure}
\usepackage{epstopdf}
\usepackage{enumerate}
\usepackage[title]{appendix}
%%%NEW COMMANDS

%%% STATEMENT-LABELS
\usepackage[dvipsnames]{xcolor}

\usepackage[colorlinks=true, pdfstartview=FitV, linkcolor=RoyalBlue,citecolor=ForestGreen, urlcolor=blue]{hyperref}
\def\CC{{\mathbb{C}}}

% Citation keys in blue, small on the side

%\usepackage[notref,notcite,color]{showkeys}
%\definecolor{labelkey}{rgb}{0,0,1}

%%%%%%%%%%%%%%%%%%%

\newtheorem{theorem}{Theorem}[section]

\newtheorem{lemma}[theorem]{Lemma}
\newtheorem{proposition}[theorem]{Proposition}

\newtheorem{remark}[theorem]{Remark}

\title{Dynamics of rotationally invariant polynomial root sets  under iterated differentiations}
\author{Andr\'e Galligo}
\address{Universit\'e C\^ote d'Azur,
Laboratoire de Math\'ematiques
"J.A. Dieudonn\'e", UMR CNRS 7351, Inria, 
Nice, France. }
\email{andre.galligo@univ-cotedazur.fr}
\author{Joseph Najnudel}
\address{University of Bristol, School of Mathematics, Bristol, United Kingdom.}
\email{joseph.najnudel@bristol.ac.uk}
\author{Truong Vu} 
\address{University of Illinois at Chicago, Department of Mathematics, Statistics, and Computer Science, Chicago, USA.}
\email{tvu25@uic.edu}
\date{}
%\KEYWORDS{random polynomial; random matrices; anti-concentration} 

%\AMSSUBJ{30C15; 30C10; 60B20; 60G57}

\begin{document}

\maketitle

\begin{abstract}
We associate to  an $N$-sample of a given rotationally invariant probability measure $\mu_0$ with compact support in the complex plane,  a polynomial $P_N$ with roots given by the sample. Then, for $t \in (0,1)$, we consider the empirical measure  $\mu_t^{N}$ associated to the root set of the $\lfloor t N\rfloor$-th derivative of  $P_N$. 
 A question posed by O'Rourke and Steinerberger \cite{o2021nonlocal}, reformulated as a conjecture by Hoskins and Kabluchko \cite{hoskins2021dynamics}, and recently reaffirmed by Campbell, O'Rourke and Renfrew \cite{campbell2024fractional}, states that under suitable conditions of regularity on $\mu_0$, for an i.i.d. sample, 
 $\mu_t^{N}$ converges to a rotationally invariant probability measure $\mu_t$  when $N$ tends to infinity, and that $(1-t)\mu_t$ has a radial density 
 $x \mapsto \psi(x,t)$ satisfying the following partial differential equation: 
 
	\begin{equation} \label{PDErotational}
		\frac{ \partial \psi(x,t) }{\partial t} = \frac{ \partial}{\partial x} \left( \frac{ \psi(x,t) }{ \frac{1}{x} \int_0^x \psi(y,t) dy } \right),
	\end{equation}
In \cite{hoskins2021dynamics}, this equation is reformulated as an equation 
on the distribution function $\Psi_t$ of the radial part of $(1-t) \mu_t$: 
\begin{equation}
\label{equationPsixtabstract}
\frac{\partial \Psi_t (x)}{\partial t} 
= x \frac{\frac{\partial \Psi_t (x)}{\partial x}  } {\Psi_t(x)} - 1. 
\end{equation}
Restricting our study to a specific family of $N$-samplings, we are able to prove a variant of the conjecture above. 
We also emphasize the important differences between the two-dimensional setting and the one-dimensional setting, illustrated in our Theorem \ref{monotonicity}.

\end{abstract}

\section{Introduction}\label{intro}

Let  $\mu_0$ be a  probability distribution on the complex plane $\mathbb C$ supported in a compact set. Then, for each integer $N \geq 1$, we consider  complex-valued (possibly) random variables $z_1,z_2,\ldots, z_N$ distributed  according to a sampling of a deterministic measure $\mu_0$: more precisely, we assume
$$ \frac{1}{N} \sum_{j=1}^N \delta_{z_j} \underset{N \rightarrow \infty}{\longrightarrow} \mu_0 \quad \text{in probability}$$
 where $\delta_z$ denotes the Dirac measure at $z\in \mathbb C$, and where both sides are viewed as random elements with values in the space of finite measures on $\mathbb C$ endowed with the weak convergence topology. 
Let $P_N$ be the monic polynomial of degree $N$ whose roots are $z_1,\ldots,z_N$, that is
$$
P_N(z) := \prod_{k=1}^N (z-z_k), \qquad z\in \mathbb C.
$$
The critical points of $P_N$ are defined as the roots of its derivative $P_N'$. It is known from ~\cite{Kabluchko2015} (first conjectured by Pemantle and Rivin~\cite{pemantle_rivin}) that for an i.i.d. sampling of $\mu_0$, the critical points of $P_N$ have the same asymptotic distribution as the roots of $P_N$.  More precisely, we have
$$
\frac{1}{N-1} \sum_{z\in \mathbb C: P_N'(z) =0} \delta_z \rightarrow \mu_0\quad \text{in probability}.
$$

We are interested in the asymptotic distribution, as $N\to\infty$, of the roots of the 
$k$-th derivative of $P_N$, denoted by $P_N^{(k)}$. It was proven that the previous behavior (same distribution) extends to higher derivatives when $k$ is finite, see e.g. \cite{byun-lee-reddy}. But, when $k$ also tends to infinity as a function of $N$, there are different regimes. A  regime which is often considered is
$k= \lfloor t N\rfloor$, with a fixed  $t \in (0,1)$, $ \lfloor x \rfloor$ denoting the greatest  integer less than  $x$. We refer to \cite{galligo2024anti}, \cite{galligo2025dynamics},\cite{michelen2024almost},\cite{michelen2024zeros}, \cite{Feng_Yao2019zeros},\cite{hoskins2021dynamics}, \cite{jalowy2025zeros} and references therein for such developments. Connections between this setting, combinatorics, and free probability are provided in \cite{arizmendi-fujie-ueda} and \cite{arizmendi2024finitefreecumulantsmultiplicative}. 

If $z_1, \dots, z_N$ are real, and then $\mu_0$ supported in $\mathbb{R}$, it has been proven that the empirical measure of the roots of 
$P_N^{(\lfloor tN \rfloor)}$ converges in probability to a measure $\mu_t$ depending only on $\mu_0$ and $t$:
$$
\frac{1}{N (1-t)} \sum_{z\in \CC : P_N^{(\lfloor t N\rfloor)}(z)=0} \delta_z \;\; \rightarrow \mu_t
\quad \text{in probability}.
$$
Moreover, $\mu_t$ can be expressed in terms of solutions of partial differential equations, called Steinerberger PDE's: see \cite{alazard-lazar-nguyen} and \cite{kiselev-tan}.   There is a similar study when $\mu_0$ is supported on the unit circle and the differentiation is considered with respect to the argument: see \cite{kabluchko2021repeated}.  

Very few results are known in the general case where the support $\mu_0$ is $2$-dimensional: a discussion on this problem is given in \cite{Galligo}. A similar conjecture as in the one-dimensional setting has been stated when $z_1, \dots, z_N$ are i.i.d. random variables distributed according to a measure $\mu_0$ which is rotationally invariant: see \cite{hoskins2021dynamics}, \cite{steinerberger2019nonlocal}, \cite{o2021nonlocal}, \cite{hoskins2022semicircle}. In that case, it is conjectured that the empirical measure of the roots of $P_N^{(\lfloor tN \rfloor)}$ converges in probability to a measure $\mu_t$ satisfying 
the PDE's \eqref{PDErotational} and \eqref{equationPsixtabstract}, under suitable regularity conditions.

In \cite{hoskins2021dynamics}, it was predicted that a simple way to express $\mu_t$ through $\mu_0$ is stated as follows. Let the distribution function of the radial part of $(1-t) \mu_t$ be
	$\Psi_t(x) := \Psi(x,t) := \int_0^x \psi(y, t) dy $
	at time $t \in (0,1)$. Then, there is a constant loss of mass for the solution: 
			$ \frac{d}{dt} \int_0^\infty \psi(x,t) d x = -1 $ and
\begin{equation} \label{Psi} \frac{ \Psi_t^{\langle -1 \rangle}(x)}{x}  =  \frac{ \Psi_0^{\langle -1 \rangle}(x+t)}{x+t} 
\end{equation}
	for $0 < x < 1-t$ and $0 <  t < 1 $, where  $( \cdot )^{\langle -1 \rangle}$ denotes inversion with respect to composition.

The heuristics behind this conjecture has been inspired by a mean field strategy similar to the one used in the one-dimensional case: see O’Rourke-Steinerberger \cite{o2021nonlocal}, Hoskins-Kabluchko \cite{hoskins2021dynamics}. The assumption 
that $z_1, \dots, z_N$ are i.i.d. does not look very natural since it is not stable by differentiation: it is likely that the conjecture is satisfied under more generic assumptions. Indeed, Campbell, O’Rourke and Renfrew \cite{campbell2024fractional} provide a "formal proof" of the conjecture without using the i.i.d. assumption. The conjecture is also supported by the fact, proven in Hoskins-Kabluchko \cite{hoskins2021dynamics}, that a similar convergence to the same measure $\mu_t$ occurs under a particular sampling of $\mu_0$, for which $P_N$ has independent coefficients. This result by Hoskins and Kabluchko uses a general result by Kabluchko and Zaporozhets \cite{kabluchko2014asymptotic} on distribution of roots of complex polynomials with independent coefficients: see also \cite{Hughes-Nikeghbali} and \cite{ibragimov2012distribution} for settings where the zeros cluster uniformly around the unit circle. 

In this article, we start by emphasizing a key difference between the one-dimensional and the two-dimensional settings, namely that on the real line, the root sets of $P'_N$ enjoys monotonicity and Lipschitz properties described in the Theorem \ref{monotonicity} in Section \ref{2}, whereas there is no natural total order in the complex plane.

In Section \ref{3}, we define a particular sampling of $\mu_0$, which can be seen as intermediate between one-dimensional and two-dimensional settings. 
More precisely, our sampling of $\mu_0$
depends on two integers $n$ and $m$. 
The roots of $P_N$ are located on $n$ circles of increasing radii $(r_j)_{1 \leq j \leq n}$, centered at the origin. Moreover, on each circle, we choose the arguments of the roots to be $2 i \pi k/m$ for 
$k \in \{0,1,\dots, m-1\}$. The $r_j$'s are chosen as a sampling of the radial marginal of $\mu_0$.  
When one differentiates $m \lfloor n t \rfloor
= \lfloor Nt \rfloor + \mathcal{O}(1)$ times
the polynomial $P_N$, we obtain roots 
located on circles of increasing radii
$(r_{j,t})_{1 \leq j \leq n - \lfloor nt \rfloor} $. Informally, the formula \eqref{Psi}, i.e., $\frac{ \Psi_t^{\langle -1 \rangle}(x)}{x}  =  \frac{ \Psi_0^{\langle -1 \rangle}(x+t)}{x+t} $,
means that 
$$\frac{r_{j,t}}{j}  \simeq \frac{r_{j +  \lfloor nt \rfloor, 0}}{j +  \lfloor nt \rfloor }
=\frac{r_{j +  \lfloor nt \rfloor}}{j +  \lfloor nt \rfloor } $$
for $1 \leq j \leq n - \lfloor nt \rfloor$, 
and then 
$$r_{j-\lfloor nt \rfloor ,t} \simeq r_{j} 
\left( 1 - \frac{nt}{j} \right)$$
for $\lfloor nt \rfloor + 1 \leq j \leq n$. 
For $m$ differentiations, this corresponds to 
$$r_{j-1,1/n} \simeq r_{j} \left(1 - \frac{1}{j} \right).$$

In Section \ref{4}, we prove convergence to an explicitly defined limiting measure $\mu_t$ for the previously defined sampling, as $m$ and $n$ go to infinity, with $m$ growing sufficiently fast with respect to $n$.  More precisely, our assumption is that $m / n \log n$ tends to infinity with $n$. We
describe $\mu_t$ in terms of a generalization of \eqref{Psi} which is available for all probability measures $\mu_0$. We provide sufficient regularity conditions under which \eqref{Psi}, \eqref{equationPsixtabstract} and \eqref{PDErotational} are satisfied. 

In Section \ref{Discussion}, we discuss some historical facts about related conjectures. 
In Section \ref{Examples_prospective}, we provide examples and discuss other problems related to our main results.

\section{Monotonicity and Lipschitz properties on the real line} \label{2}
In this section, we show that in the case where all roots are real, 
taking the derivative of polynomials preserves the natural partial order between sets of roots. 
Such a result is specific to one-dimensional setting and will be useful in our study of the main setting of the article. We also deduce a Lipschitz property for the map giving the roots of the derivative from the roots of the initial polynomial. 

\begin{theorem} \label{monotonicity}
Let $n \geq 2$, $w_1, \dots, w_n > 0$. Then, the function from $$ \Delta_n := \{ (z_1, \dots, z_n)\in  \mathbb{R}^n, z_1 \leq z_2 \leq \dots \leq z_n \} $$ to $\Delta_{n-1}$
such that the image of $(z_1, \dots, z_n)$ 
is the nondecreasing sequence of roots of the polynomial 
$$\sum_{j=1}^n w_j \prod_{1 \leq \ell \leq n, \ell \neq j} (z - z_\ell) \in \mathbb{R}_{n-1}(z),$$
counted with multiplicity, is increasing for the partial order $\preccurlyeq$ given by the following definition: on $\Delta_p$, 
$$(z_1, \dots, z_p) \preccurlyeq (z'_1, \dots, z'_p) $$
if and only if $z_j \leq z'_j$ for $1 \leq j \leq p$.

Moreover, if $w_1 = w_2 = \dots = w_n$, this function is $n/(n-1)$-Lipschitz for the L\'evy metric $L$ between empirical measures, which is defined for two probability measures $\mathbb{P}$ and 
$\mathbb{Q}$ by 
$$L(\mathbb{P}, \mathbb{Q}) = 
\inf \{\varepsilon > 0, \forall x \in \mathbb{R},
F_{\mathbb{P}} (x - \varepsilon ) - \varepsilon \leq F_{\mathbb{Q}}(x) \leq F_{\mathbb{P}} (x + \varepsilon ) + \varepsilon  \},$$
where $F_{\mathbb{P}}$ is the distribution function of $\mathbb{P}$ and $F_{\mathbb{Q}}$ is the distribution function of $\mathbb{Q}$.

\end{theorem}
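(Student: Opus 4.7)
I would split the argument into the monotonicity part and the Lipschitz part, treating them with different techniques.

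\emph{Monotonicity via implicit differentiation.} First I work on the dense open subset of $\Delta_n$ where $z_1<z_2<\cdots<z_n$ are pairwise distinct, and factor
\[
R(z) \;=\; \Bigl(\prod_{\ell=1}^n (z-z_\ell)\Bigr)\, f(z), \qquad f(z) := \sum_{j=1}^n \frac{w_j}{z-z_j}.
\]
The roots of $R$ outside $\{z_1,\dots,z_n\}$ coincide with the zeros of $f$. Since $f'(z)=-\sum_j w_j/(z-z_j)^2<0$ and $f$ sweeps from $+\infty$ to $-\infty$ on each interval $(z_j,z_{j+1})$, there is a unique root $y_j\in(z_j,z_{j+1})$, and these are precisely the coordinates of the image. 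Freezing all $z_\ell$ but $z_k$ and differentiating $f(y_j)=0$ implicitly gives
\[
\frac{\partial y_j}{\partial z_k} \;=\; \frac{w_k/(y_j-z_k)^2}{\sum_\ell w_\ell/(y_j-z_\ell)^2} \;>\;0.
\]
To pass from $(z_j)\preccurlyeq(z'_j)$ inside $\Delta_n$, I update one coordinate at a time in the order $z_n,z_{n-1},\dots,z_1$; each intermediate tuple remains in $\Delta_n$ since the already-updated entries at indices $>k$ are upper bounds for the target at index $k$, and the positivity of the partials guarantees $y_j\leq y'_j$ for every $j$. The case with coincidences in the $z_j$ is obtained by approximation, using continuity of polynomial roots in the coefficients.

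\emph{Lipschitz estimate for equal weights.} With $w_j\equiv 1$, $R$ is a scalar multiple of $Q'$ for $Q(z)=\prod_\ell(z-z_\ell)$, and Rolle's theorem sharpens the root location to the interlacing $z_k\leq y_k\leq z_{k+1}$ for $k=1,\dots,n-1$. Counting $\#\{k:y_k\leq x\}$ via either side of the interlacing yields, for every $x\in\mathbb{R}$,
\[
\frac{nF_\mu(x)-1}{n-1} \;\leq\; F_\nu(x) \;\leq\; \frac{nF_\mu(x)}{n-1},
\]
and equivalently on quantile functions $G_\mu(u):=\inf\{x:F_\mu(x)\geq u\}$,
\[
G_\mu\!\left(\tfrac{(n-1)u}{n}\right) \;\leq\; G_\nu(u) \;\leq\; G_\mu\!\left(\tfrac{(n-1)u+1}{n}\right),\qquad u\in(0,1],
\]
with the same bounds relating $\mu'$ and $\nu'$. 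Combining these chain-wise with the equivalent quantile form of the Lévy condition, $G_\mu(u-\varepsilon)-\varepsilon\leq G_{\mu'}(u)\leq G_\mu(u+\varepsilon)+\varepsilon$ for every $\varepsilon>L(\mu,\mu')$, I derive the analogous condition for $(\nu,\nu')$ at the rescaled scale $\eta=\frac{n}{n-1}\varepsilon$.

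\emph{Main obstacle.} The delicate point is obtaining \emph{exactly} the constant $n/(n-1)$. A naive chaining gives $G_\nu(u)\leq G_{\nu'}\!\bigl(u+\frac{1+n\varepsilon}{n-1}\bigr)+\varepsilon$, which is short of the desired $G_{\nu'}(u+\eta)+\eta$ by an additional horizontal $1/(n-1)$ and a vertical deficit $\varepsilon/(n-1)$; these reflect the mismatch between the $1/n$-jumps of $F_\mu$ and the $1/(n-1)$-jumps of $F_\nu$. That $n/(n-1)$ is nonetheless sharp is witnessed by the extremal configuration $z_k=k-1$ for $k<n$ with $z_n$ pushed far to the right, giving $L(\mu,\mu')=1/n$ while $L(\nu,\nu')=1/(n-1)$. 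Absorbing the residual $1/(n-1)$ into the rescaled slack requires careful use of the discrete $1/(n-1)$-step structure of $G_\nu$ and $G_{\nu'}$, effectively trading the excess horizontal shift $1/(n-1)$ against the vertical deficit $\varepsilon/(n-1)$ by matching steps at the jump levels of the quantile functions. This final combinatorial accounting on the two quantile staircases is, in my view, the main technical step of the Lipschitz half of the theorem.
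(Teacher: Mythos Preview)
Your monotonicity argument via implicit differentiation on the open stratum, followed by a limit, is correct and is a genuinely different route from the paper's direct comparison of the two rational functions $\sum w_j/(z-z_j)$ and $\sum w_j/(z-z'_j)$ at the $s$-th root $\mu$ of $P$. One small omission: the theorem asserts \emph{strict} monotonicity, and continuity only transports the non-strict inequality. You can patch this exactly as the paper does, by reading off the sum of the roots from the top two coefficients.

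The Lipschitz half, however, has a real gap, and the ``trading'' you sketch cannot close it. Your chaining uses only the interlacing bounds
\[
G_\mu\!\Bigl(\tfrac{(n-1)u}{n}\Bigr)\le G_\nu(u)\le G_\mu\!\Bigl(\tfrac{(n-1)u+1}{n}\Bigr),
\]
applied separately to $(\mu,\nu)$ and to $(\mu',\nu')$. These bounds carry no cross-information: already when $\mu=\mu'$ (so $L(\mu,\mu')=0$), they only pin each $G_\nu(u)$ and $G_{\nu'}(u)$ to a common interval of width up to $\max_k(z_{k+1}-z_k)$, hence cannot force $L(\nu,\nu')$ small. More concretely, the ``trade'' you propose would require
\[
G_{\nu'}\!\Bigl(u+\eta+\tfrac{1}{n-1}\Bigr)-G_{\nu'}(u+\eta)\le \tfrac{\varepsilon}{n-1},
\]
but the left side is a single step of the $G_{\nu'}$ staircase, i.e.\ a gap $y'_{k+1}-y'_k$, which is unbounded; the discrete $1/(n-1)$ structure you invoke governs the horizontal axis, not the vertical one.

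The paper obtains the sharp constant by a different mechanism that does couple the two configurations: from $L(\mu,\mu')<\varepsilon$ it first extracts the index-shift inequality $z'_{s+\lfloor n\varepsilon\rfloor}\ge z_s-\varepsilon$, then feeds this into the \emph{monotonicity} just proved, comparing $(z'_1,\dots,z'_n)$ with the auxiliary tuple
\[
(z'_1-A,\dots,z'_{\lfloor n\varepsilon\rfloor}-A,\ z_1-\varepsilon,\dots,z_{n-\lfloor n\varepsilon\rfloor}-\varepsilon),
\]
and finally compares this auxiliary polynomial with the shift of $(z_1,\dots,z_n)$ via the sign of the extra rational-function terms (this last step is where $w_1=\dots=w_n$ is used). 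The point is that the monotonicity result is not just a warm-up: it is the engine of the Lipschitz estimate, and a proof that bypasses it through interlacing alone will not reach $n/(n-1)$.
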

\begin{proof}
Let us assume 
$$(z_1, \dots, z_n) \preccurlyeq (z'_1, \dots, z'_n)$$
in $\Delta_n$. 
If we take into account multiplicities, for $1 \leq s \leq n-1$, the $s$-th smallest root of the two polynomials 
$$P : z \mapsto \sum_{j=1}^n w_j \prod_{1 \leq \ell \leq n, \ell \neq j} (z - z_\ell)$$ 
and 
$$Q : z \mapsto \sum_{j=1}^n w_j \prod_{1 \leq \ell \leq n, \ell \neq j} (z - z'_\ell)$$ 
lie in the intervals $[z_s, z_{s+1}]$ and $[z'_s, z'_{s+1}]$, respectively. If $z_s = z_{s+1}$ or 
$z'_s = z'_{s+1}$, we have $z_s \leq z_{s+1} \leq z'_s \leq z'_{s+1}$. Hence, the $s$-th root of $P$ is at most 
the $s$-th root of $Q$. We now assume that $z_s < z_{s+1}$
and $z'_s < z'_{s+1}$, and we denote by $\mu$ the $s$-th root of $P$. If $\mu \leq z'_s$, $\mu$ is at most the $s$-th root of $Q$.
Otherwise, $\mu$ is the unique root in $(z'_s, z_{s+1})$
of the rational function 
$$z \mapsto \sum_{j=1}^n \frac{w_j}{z - z_j}.$$
Since $z'_s < \mu < z_{s+1}$, $\mu$ is strictly at the right 
of all points in the interval $[z_r, z'_r]$ for $r \leq s$ and strictly 
at the left for $r \geq s+1$. We deduce that in both cases,
$1/(\mu - z)$ is well-defined and increasing in $z \in [z_r, z'_r]$. Hence, 
$$ \sum_{j = 1}^n \frac{w_j}{ \mu - z'_j} \geq
\sum_{j = 1}^n \frac{w_j}{ \mu - z_j}  = 0.$$
Now, the $s$-th root $\nu$ of $Q$ is the unique root
in $(z'_s , z'_{s+1})$ of the rational function
$$z \mapsto \sum_{j=1}^n \frac{w_j}{z - z'_j}.$$
Since this rational function is decreasing on $(z'_s , z'_{s+1})$ and is nonnegative at $\mu$, which is in this interval, 
we deduce that $\nu \geq \mu$. 
We have proven that the function in the proposition is nondecreasing. 
It is strictly increasing because a simple observation of the two leading coefficients shows that for 
 $$(z_1, \dots, z_n) \preccurlyeq (z'_1, \dots, z'_n)$$ 
 and 
 $$(z_1, \dots, z_n) \neq (z'_1, \dots, z'_n),$$
the sum of the roots of $Q$ is strictly larger than the sum of the roots of $P$. 

For the Lipschitz property in the case 
$w_1 = \dots = w_n$, let us assume that 
for $(z_1, \dots, z_n)$ and 
$(z'_1, \dots, z'_n)$ in $\Delta_n$, the corresponding empirical measures are at distance strictly smaller than $\varepsilon \in (0,1)$. 
In this case, for $1 \leq s \leq n$, applying the definition
of the L\'evy distance to $x < z_s - \varepsilon$,  and letting $x \rightarrow z_s - \varepsilon$, we deduce that the number of points 
among $z'_1, \dots, z'_n$ which are strictly smaller than 
$z_s - \varepsilon$ is at most $n ( (s-1)/n + \varepsilon)$, and then at most $s - 1 + \lfloor n \varepsilon \rfloor $. Hence, 
$z'_{s + \lfloor n \varepsilon \rfloor}  \geq z_s - \varepsilon$ as soon 
as $s + \lfloor n \varepsilon \rfloor \leq n$. 
We deduce that
$$(z'_1 - A, \dots, z'_{ \lfloor n \varepsilon \rfloor} - A, z_1 - \varepsilon, \dots, z_{n -\lfloor n \varepsilon \rfloor} - \varepsilon)
\preccurlyeq (z'_1, \dots, z'_n) $$
where $A > 0$ is sufficiently large, in order to have the left-hand side in $\Delta_n$. 
We then have the same inequality 
between the roots of the polynomials of degree $n-1$
constructed in the statement of the proposition. 
Let us compare the polynomials of degree $n-1$
constructed from 
$$(z'_1 - A, \dots, z'_{ \lfloor n \varepsilon \rfloor} - A, z_1 - \varepsilon, \dots, z_{n -\lfloor n \varepsilon \rfloor} - \varepsilon)$$ and from 
$$ (z_1 - \varepsilon, \dots, z_n - \varepsilon).$$
For $1 \leq r \leq n - 1- \lfloor n \varepsilon \rfloor$, 
the $(r + \lfloor n \varepsilon \rfloor)$-th 
root of the first polynomial, and the $r$-th root of the second polynomial are both in the interval $[z_r - \varepsilon, z_{r+1} - \varepsilon]$, when we take into account multiplicities. If $z_r < z_{r+1}$, these roots are the roots of rational functions, which are decreasing in $(z_r- \varepsilon, z_{r+1}- \varepsilon)$, 
the rational function constructed from 
$(z'_1 - A, \dots, z'_{ \lfloor n \varepsilon \rfloor} - A, z_1 - \varepsilon, \dots, z_{n -\lfloor n \varepsilon \rfloor} - \varepsilon)$ being larger than the rational function constructed from $(z_1 - \varepsilon, \dots, z_n - \varepsilon)$ at each point on $(z_r- \varepsilon, z_{r+1}- \varepsilon)$, because one goes from the first rational function to the second by replacing positive terms by negative terms, keeping the other terms unchanged: notice that here, we use the fact that $w_1 = w_2 = \dots = w_n$.  The $(r + \lfloor n \varepsilon \rfloor)$-th zero of the polynomial constructed 
from $(z'_1 - A, \dots, z'_{ \lfloor n \varepsilon \rfloor} - A, z_1 - \varepsilon, \dots, z_{n -\lfloor n \varepsilon \rfloor} - \varepsilon)$ is then at least equal to the 
$r$-th zero of the polynomial constructed from 
$(z_1- \varepsilon, \dots, z_n - \varepsilon)$. Hence, the 
$(r + \lfloor n \varepsilon \rfloor)$-th zero of the polynomial constructed from $(z'_1, \dots, z'_n)$ is at least the $r$-th zero $\mu_r$ of the 
polynomial constructed from $(z_1, \dots, z_n)$, minus $\varepsilon$. 
If $F$ and $G$ are the distribution functions of the empirical distribution of the roots of the polynomials of degree $n-1$ constructed from 
$(z_1, \dots, z_n)$ and $(z'_1, \dots, z'_n)$, we deduce that for $1 \leq r \leq n - 1 - \lfloor n \varepsilon \rfloor$,  
$x < \mu_r - \varepsilon$, 
and $x \geq  \mu_{r-1} - \varepsilon$ when $r \geq 2$, 
$$G(x) \leq \frac{1}{n-1} (r - 1 + \lfloor n \varepsilon \rfloor) \leq F(\mu_{r-1})   + \frac{n}{n-1} \varepsilon  \leq F(x + \varepsilon) +  \frac{n}{n-1} \varepsilon 
$$
when $r \geq 2$, and 
$$G(x) \leq \frac{1}{n-1} (\lfloor n \varepsilon \rfloor) \leq   \frac{n}{n-1} \varepsilon  \leq F(x + \varepsilon) +  \frac{n}{n-1} \varepsilon 
$$
when $r = 1$. 
We then get 
$$G(x) \leq  F(x + \varepsilon) +  \frac{n}{n-1} \varepsilon $$
for all $x < \mu_{n- 1 - \lfloor n \varepsilon \rfloor} - \varepsilon$, if $n- 1 - \lfloor n \varepsilon \rfloor \geq 1$. 
If $n - 1 -\lfloor n \varepsilon \rfloor \geq 1 $
and $x \geq \mu_{n- 1 - \lfloor n \varepsilon \rfloor}
- \varepsilon$, 
we get 
$$  F(x + \varepsilon) +  \frac{n}{n-1}  \varepsilon
\geq \frac{n - 1 -\lfloor n \varepsilon \rfloor}{n-1}
+ \frac{n}{n-1} \varepsilon \geq 1 \geq G(x),$$
and if $n - 1 - \lfloor n \varepsilon \rfloor = 0$, 
we have $n \varepsilon \geq n-1$ and then for all $x \in \mathbb{R}$, 
$$ F(x + \varepsilon) +  \frac{n}{n-1} \varepsilon
\geq 1 \geq G(x).$$
Hence, in any case, we have proven 
$$G(x) \leq F(x + \varepsilon) + \frac{n}{n-1}  \varepsilon.$$
Now, if we change all the points to their opposite and reverse their order, the distribution functions 
are changed via the map $F \mapsto \widetilde{F}$ where 
$$\widetilde{F} (x) = 1 - F((-x)-),$$
where $F((-x)-)$ denotes the left-limit of $F$ at $-x$,  
and the L\'evy distance between empirical measures does not change: indeed, increasing $\varepsilon$ by an arbitrarily small positive quantity in the definition of the distance absorbs possible errors due to the introduction of left limits of distribution functions. 
Applying the reasoning above after doing this transformation gives, for all $x \in \mathbb{R}$, 
with obvious notation, 
$$1 - G(x-) = \widetilde{G} (-x) \leq \widetilde{F} (-x + \varepsilon) + \frac{n}{n-1} \varepsilon
= 1 - F((x - \varepsilon)-) + \frac{n}{n-1} \varepsilon$$
and then
$$G(x) \geq G(x-) \geq F((x-\varepsilon)-) - \frac{n}{n-1} \varepsilon
\geq F(x-\varepsilon- \eta) - \frac{n}{n-1}
(\varepsilon + \eta),$$
for arbitrarily small $\eta > 0$. 
This inequality, combined with the upper bound above,
implies, after letting $\eta \rightarrow 0$, the Lipschitz property stated in the theorem. 

\end{proof}
The following result shows that, keeping the notation of the introduction of this article, 
the limiting measure $\mu_t$ cannot depend on the sampling of the measure $\mu_0$ in the case where all roots of polynomials are real. 
The situation is different for complex roots: if the uniform distribution on the unit circle is sampled by taking the $n$-th roots of unity, all roots of iterated derivatives are equal to zero, whereas 
sampling according to the setting given by Theorem \ref{main} below gives a different dynamics.

\begin{proposition}
Let $t \in (0,1)$. 
For $n \geq 1$, let $P_n$ and $Q_n$ be degree $n$ polynomials: one assumes that the empirical distribution of the roots of $P_n$ and $Q_n$ both converge to a limiting measure $\mu_0$ when $n \rightarrow \infty$, and 
that the empirical measure of the roots of the $\lfloor t n \rfloor$-th derivative of $P_n$ converges to a limiting measure $\mu_t$. Then, the empirical measure of the roots of the $\lfloor t n \rfloor$-th derivative of $Q_n$ also converges to $\mu_t$.
\end{proposition}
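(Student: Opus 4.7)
The plan is to apply the Lipschitz property of Theorem \ref{monotonicity} iteratively. Since the context is the case where all roots are real and we take the usual derivative (corresponding to weights $w_1 = \cdots = w_n = 1$), the map sending the ordered root sequence of a real-rooted monic polynomial of degree $n$ to that of its derivative is $n/(n-1)$-Lipschitz for the L\'evy metric between the corresponding empirical measures. Composing this map $k$ times gives a Lipschitz constant equal to the telescoping product
\[
\prod_{j=0}^{k-1} \frac{n-j}{n-j-1} = \frac{n}{n-k}.
\]
For $k = \lfloor tn \rfloor$, this bound tends to $1/(1-t)$, and in particular stays uniformly bounded in $n$.

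Next I would translate the hypothesis into L\'evy distances. Let $\nu_n^P$ and $\nu_n^Q$ be the empirical measures of the roots of $P_n$ and $Q_n$. Since both converge weakly to $\mu_0$, and the L\'evy metric metrizes weak convergence of probability measures on $\RR$, the triangle inequality yields $L(\nu_n^P, \nu_n^Q) \to 0$. Applying the iterated Lipschitz bound to the empirical measures of the roots of $P_n^{(\lfloor tn \rfloor)}$ and $Q_n^{(\lfloor tn \rfloor)}$, call these $\widetilde{\nu}_n^P$ and $\widetilde{\nu}_n^Q$, we obtain
\[
L(\widetilde{\nu}_n^P, \widetilde{\nu}_n^Q) \leq \frac{n}{n - \lfloor tn \rfloor}\, L(\nu_n^P, \nu_n^Q) \longrightarrow 0.
\]
Since by assumption $\widetilde{\nu}_n^P$ converges weakly to $\mu_t$, i.e.\ $L(\widetilde{\nu}_n^P, \mu_t) \to 0$, the triangle inequality gives $L(\widetilde{\nu}_n^Q, \mu_t) \to 0$, which is precisely the desired weak convergence of $\widetilde{\nu}_n^Q$ to $\mu_t$.

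There is essentially no hard step: the whole argument is a direct consequence of Theorem \ref{monotonicity}. The only point that requires a small check is that the telescoping of Lipschitz constants over $\lfloor tn \rfloor$ iterations remains bounded, which is immediate because it equals $n/(n-\lfloor tn \rfloor) \to 1/(1-t) < \infty$ for $t \in (0,1)$. Note that this argument genuinely relies on the one-dimensional setting: the Lipschitz property of Theorem \ref{monotonicity} has no analogue in the complex case, which is exactly why, as remarked just before the proposition, different samplings of the same rotationally invariant $\mu_0$ on the unit circle (roots of unity versus the sampling of Theorem \ref{main}) can lead to genuinely different limiting dynamics.
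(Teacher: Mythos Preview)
Your proof is correct and follows essentially the same approach as the paper: iterate the $n/(n-1)$-Lipschitz bound from Theorem~\ref{monotonicity} (with equal weights) $\lfloor tn\rfloor$ times to get the telescoping constant $n/(n-\lfloor tn\rfloor)$, then use that $L(\nu_n^P,\nu_n^Q)\to 0$ together with the triangle inequality. Your write-up is in fact slightly more detailed than the paper's, which compresses the same argument into a few sentences.
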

\begin{proof}
One applies $\lfloor t n \rfloor$ times the previous proposition, with all coefficients $w_j$ equal to $1$. 
The L\'evy distance between the empirical measures of the roots of the  $\lfloor t n \rfloor$-th derivatives of $P_n$ and $Q_n$ is at most $n/(n - \lfloor t n \rfloor)$ times the L\'evy distance between the empirical measures of the roots of $P_n$ and $Q_n$, and 
then tends to zero since these empirical measures converge to the same limit $\mu_0$. 
Since the distance to $\mu_t$ of the empirical measure
 of the roots of the  $\lfloor t n \rfloor$-th derivatives of $P_n$ converges to zero, it is then the same for the distance to $\mu_t$ of the empirical measure
 of the roots of the  $\lfloor t n \rfloor$-th derivatives of $Q_n$. 
\end{proof}

\section{Sampling  a  rotationally invariant probability measure} \label{3}
In this section, we define the sampling which is considered in our main Theorem \ref{main} below.
We start with a
rotationally invariant probability measure $\mu_0$, 
which can be written, in polar coordinates, as a tensor product $\nu_0 \otimes unif$, where $\nu_0$ is a probability measure on $\mathbb{R}_+$, and 
$unif$ is the uniform distribution on $[0, 2 \pi)$.
We have for all $s > 0$, 
$$\mu_0 (\mathbb{D}_s) = \nu_0 ([0,s))$$
where $\mathbb{D}_s$ is the open unit disc of center $0$ and radius $s$. 

For couples of positive integers $(n,m)$, we consider, for $N = mn$, a $N$-sample 
of $\mu_0$ defined by 
 $$
\mu^{(n,m)}:= \frac{1}{nm}  \sum_{j=1}^n 
\sum_{k = 0}^{m-1} \delta_{r^{(n)}_j e^{2 i \pi k/m}}
$$
where $(r^{(n)}_j)_{1 \leq j \leq n}$ is a nondecreasing sequence of positive radii sampling the distribution $\nu_0$:
$$\frac{1}{n} \sum_{j=1}^n \delta_{r^{(n)}_j} 
\longrightarrow \nu_0$$
when $n \rightarrow \infty$. 
The sampling has been chosen in such a way that 
the points lie on $n$ circles, each circle having $m$ equidistributed points, in order to approximate the rotational invariance. The points also lie on 
$m$ half-lines, corresponding to arguments 
multiples of $2 \pi/m$. 

 An illustration (Figure \ref{m=15,n=5withP,P'',P(15)}) with $m=15$ and $n=5$, computed with Maple, shows the root set of $P$, $P''$, $P^{(15)}$ and $P^{(45)}$.

\begin{figure}[ht!]
  \includegraphics[width=4cm]{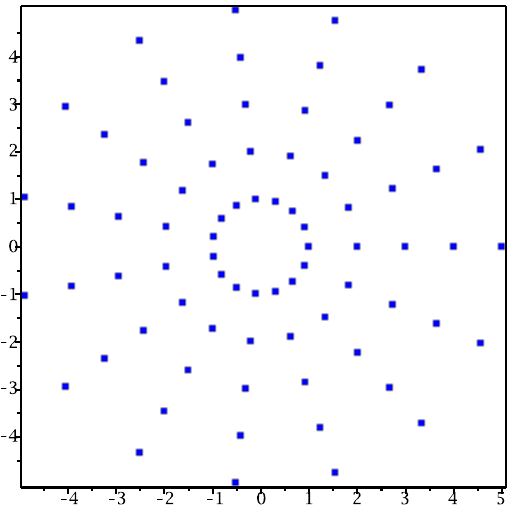}
   \includegraphics[width=4cm]{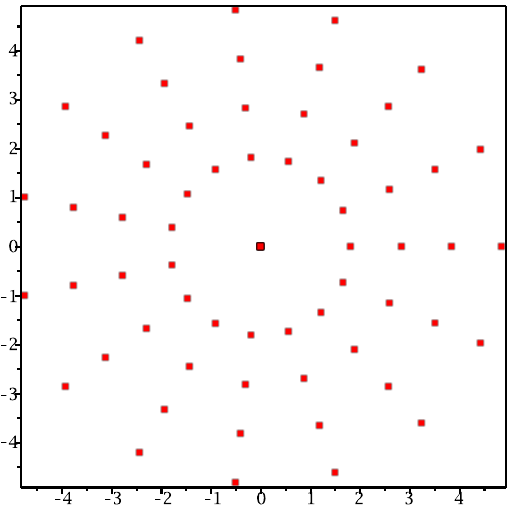}
   \includegraphics[width=4cm]{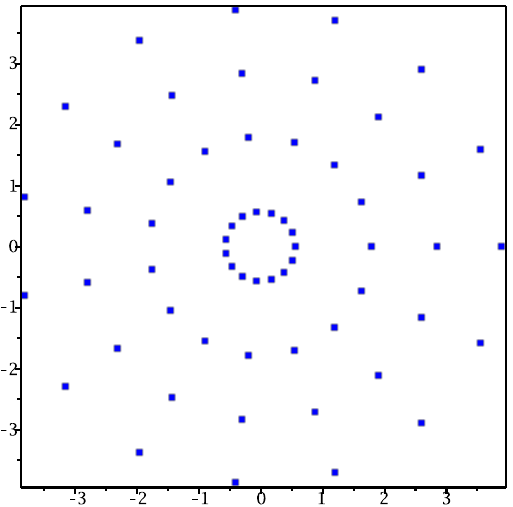}
   \includegraphics[width=4cm]{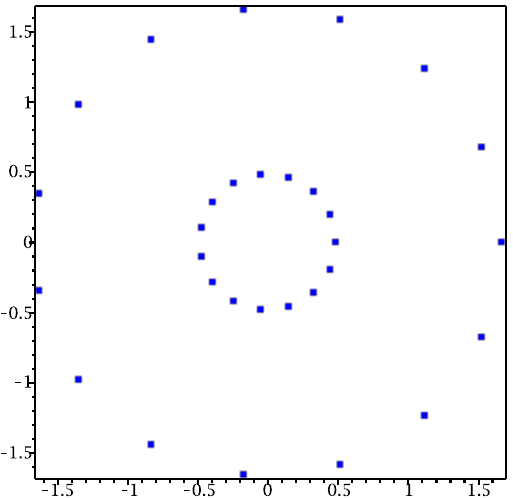}
   \caption{Root sets of $P$, $P^{(2)}$, $P^{(15)}$, and $P^{(45)}$.  }
   \label{m=15,n=5withP,P'',P(15)}
\end{figure}

In this setting, we get the polynomial 
$P_{n,m}$ such that: 
\begin{equation} P_{n,m} (z) = \prod_{j=1}^n 
\prod_{k = 0}^{m-1} (z - r^{(n)}_j e^{2i \pi k/m})
=   \prod_{j=1}^n (z^m - (r^{(n)}_j)^m).
\label{Pnm}
\end{equation}

\begin{lemma} \label{lemmaConstruction}
 Let $n, m \geq 1$, $q \geq 0$ be integers, $Q$ a polynomial of degree $n$, all roots being real and positive, $P$ the polynomial such that $P(z)=z^q Q(z^m)$. Then, the  derivative of $P$ has the following form,  with a polynomial $S$: 
 $$ P'(z) = q\, z^{q-1} \,Q(z^m) + m\, z^{q+m-1} \,Q'(z^m) \, =\,z^{q-1} \,S(z^m)\,  \quad  \text{if}  \quad  q \geq 1,  \quad \text{with}  \quad  \deg(S)=n $$ 
  $$ P'(z) =m  z^{m-1} Q'(z^m) \quad  \text{if}  \quad q=0 ,  \quad  \text{with}  \quad  \deg(Q')=n-1. $$
  Moreover all roots of the polynomials $S$ and $Q'$ are real and positive. The roots of $S$ interlace between $0$ and the roots of $Q$, and 
  the roots of $Q'$ interlace between the roots of $Q$. 
\end{lemma}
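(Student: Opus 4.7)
The plan is to handle the derivative formula by a direct product-rule computation, and then to analyze the roots of the factored-out polynomial (either $S$ or $Q'$) via a logarithmic-derivative / rational function argument.

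First, I would apply Leibniz to $P(z) = z^q Q(z^m)$, obtaining $P'(z) = q z^{q-1} Q(z^m) + m z^{q+m-1} Q'(z^m)$. If $q = 0$, only the second term survives, giving $P'(z) = m z^{m-1} Q'(z^m)$ with $\deg Q' = n - 1$. If $q \geq 1$, factoring $z^{q-1}$ and setting $S(w) := q Q(w) + m w Q'(w)$ gives $P'(z) = z^{q-1} S(z^m)$; inspecting both summands, the leading coefficient of $S$ equals $(q + mn) a_n$, where $a_n$ is the leading coefficient of $Q$, hence $\deg S = n$.

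Write $Q(w) = a_n \prod_{i=1}^{p}(w - \beta_i)^{k_i}$ with $0 < \beta_1 < \dots < \beta_p$ and $\sum_i k_i = n$. For $q = 0$, the reality, positivity, and interlacing of the $n-1$ roots of $Q'$ are classical consequences of Rolle's theorem applied on each interval $(\beta_i, \beta_{i+1})$, combined with the standard fact that each $\beta_i$ is a root of $Q'$ of exact multiplicity $k_i - 1$; these contributions sum to $(p-1) + \sum_i (k_i - 1) = n - 1$, exhausting $\deg Q'$.

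For $q \geq 1$, a Taylor expansion of $S$ at $\beta_i$ shows that $S$ vanishes there to exact order $k_i - 1$ (the leading surviving term being proportional to $m \beta_i k_i \neq 0$). Away from $\{\beta_1, \dots, \beta_p\}$, dividing $S$ by $Q$ and using the partial fraction $Q'/Q = \sum_i k_i/(w-\beta_i)$ converts $S(w) = 0$ into
\[
g(w) := \sum_{i=1}^p \frac{k_i \beta_i}{w - \beta_i} = -n - \frac{q}{m}.
\]
On each interval $(\beta_i, \beta_{i+1})$, $g$ is strictly decreasing from $+\infty$ to $-\infty$ and therefore meets any prescribed level exactly once; on $(0, \beta_1)$, $g$ is strictly decreasing with $g(0^+) = -n$ and $g(\beta_1^-) = -\infty$, and the strict inequality $-n - q/m < -n$---which is precisely where the hypothesis $q \geq 1$ is used---yields exactly one simple root in $(0, \beta_1)$; on $(\beta_p, +\infty)$, $g$ takes values in $(0, +\infty)$, so there is none. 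Combining these $p$ simple roots with the $\sum_i (k_i - 1) = n - p$ roots of $S$ located at the $\beta_i$'s accounts for all $n = \deg S$ zeros of $S$, real positive and interlaced with $0 < \beta_1 < \dots < \beta_p$ as claimed. The only step demanding any real care is the count on $(0, \beta_1)$, and this is exactly the point where the assumption $q \geq 1$ becomes essential.
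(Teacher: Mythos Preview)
Your proof is correct and follows essentially the same logarithmic-derivative/rational function approach as the paper: the paper considers $\frac{S(x)}{xQ(x)} = \frac{q}{x} + \frac{mQ'(x)}{Q(x)}$ and counts sign changes between its $n+1$ simple poles $0 < \beta_1 < \dots < \beta_n$ (assuming simple roots, then invoking continuity), while you rewrite $S=0$ as $g(w) = -n - q/m$ and argue interval by interval. The only notable difference is that you treat multiple roots of $Q$ directly via the Taylor expansion of $S$ at each $\beta_i$, whereas the paper handles the simple-root case first and passes to the general case by a limiting argument; both are standard and yield the same interlacing conclusion.
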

\begin{proof}
The first part is immediate, with 
$$S(x) = q Q(x) + m x Q'(x).$$
The second part is a standard result for $Q'$. 
For $S$, in the case where $Q$ has $n$ simple roots, it is a consequence of the fact that
$\frac{m x Q'(x) + q Q(x)}{x Q(x)}$ has $n+1$ simple poles at 
$0$ and the roots of $Q$, hence $n$ roots interlacing between the poles. By continuity, the lemma remains true when $Q$ has multiple roots. 
\end{proof}

As a consequence of the lemma, after $m$ differentiations of $P_{n,m}$, we get a
polynomial of the same shape, with $n$ replaced by $n-1$, and different values of the radii, given 
by a nondecreasing sequence $(R_j^{(n)})_{1 \leq j \leq n-1}$, such that $R_j^{(n)} \leq r_{j+1}^{(n)}$
for $1 \leq j \leq n-1$. 
In order to prove the variant of the conjecture we study, we will compare $R_j^{(n)}$ to 
$(1 - 1/j) r_{j+1}^{(n)}$ in a quantitative way. 
 We are then led to analyze what happens during a sequence of $m$ differentiations between the 
 $(\ell m)$-th and the $((\ell+1) m)$-th 
 derivatives of $P_{n,m}$, for $0 \leq \ell \leq n-1$. 

 For this purpose, starting with a polynomial $Q$, all its roots being real and positive, we will have to consider, as detailed in the next section, a sequence $(Q_k)_{1 \leq k \leq m} 
$ of polynomials of degree one less that the degree of $Q$, such that $Q_1 = m Q'$ and 
$$ Q_{k+1}(z):= m z  \,Q_k'(z) + (m-k) \, Q_k(z)
$$ 
for $1 \leq k \leq m-1$. 
\vspace{1cm}

\section{Statement and proof of the main theorem} \label{4} 
The main result of the article is stated as follows. 
\begin{theorem} \label{main}
Let $\nu_0$ be a probability measure on $\mathbb{R}_+$ with compact support. 
For $n \geq 1$, let $(r_j^{(n)})_{1 \leq j \leq n}$ an increasing sequence in $\mathbb{R}_+$ such that
$$\frac{1}{n} \sum_{j=1}^n \delta_{r_j^{(n)}} 
\underset{n \rightarrow \infty}{\longrightarrow}\nu_0.$$
Then, for any sequence $(m_n)_{n \geq 1}$
such that $$\frac{m_n}{n \log n} \underset{n \rightarrow \infty}{\longrightarrow} \infty,$$ and for $t \in (0,1)$, the empirical measure of the roots of the $\lfloor n m_n t \rfloor$-th derivative of the polynomial 
$P_{n,m_n}$ defined by \eqref{Pnm} tends 
to the measure $\mu_t = \nu_t \otimes unif$ when $n \rightarrow \infty$, 
where $\nu_t$ is the distribution 
of 
 \begin{align}
 \left(1 - \frac{t}{V_t} \right) q_{\nu_0} (V_t), \label{nuV}
 \end{align}
 $V_t$ being a uniform random variable on $[t,1]$,
$q_{\nu}$ denoting the quantile function of a given finite measure $\nu$: 
\begin{align}
    q_{\nu_0}(\alpha) = \inf \{ y \geq 0, \nu_0 ([0,y]) \geq \alpha \}
\end{align}
for $\alpha \in [0,1]$. 
Moreover, the quantiles of the measure $(1-t)\nu_t$, which has total mass $1-t$, satisfy the 
equation: 
\begin{equation}
    \label{eqquantiles}
 q_{(1-t) \nu_t} (x)  = 
\frac{ x q_{\nu_0}(x+t)}{x+t}
\end{equation}
for $0 \leq x \leq 1-t$. 

In the case where the distribution function 
of $\nu_0$ is a continuous and strictly increasing bijection from $[0,1]$ to  $[0,A]$ for some $A > 0$, the distribution function of $(1-t)\nu_t$ is a continuous and strictly increasing bijection from $[0,1-t]$ to $[0,A(1-t)]$. In this case, these distributions functions $\Psi_0$ and $\Psi_t$ have $q_{\nu_0}$ and $q_{(1-t) \nu_t}$ as reverse bijections, 
respectively from $[0,1]$ to $[0,A]$ and from $[0,1-t]$ to $[0, A(1-t)]$. Hence, \eqref{eqquantiles} implies \eqref{Psi} in this case.

In the case where 
$\nu_0$ is absolutely continuous with respect to the Lebesgue measure on $[0,A]$, 
with a continuous, strictly positive density 
on $(0,A)$, the measure 
$(1-t) \nu_t$ is, for all $t \in [0,1)$.  supported on the interval 
$[0, A(1-t)]$, has a continuous distribution 
function $\Psi_t$, with a strictly positive 
derivative $x \mapsto \psi(t,x)$
on $(0, A(1-t))$, which is a density of $(1-t) \nu_t$ with respect to the Lebesgue measure.  Moreover, 
$$(t, x) \mapsto \Psi_t(x)$$ is a continuously differentiable function of two variables on the set 
$$\{(t,x) \in [0,1) \times \mathbb{R}, 
x \in (0, A(1-t)) \},$$
and the following partial differential equation is satisfied on the same set: 
\begin{equation}
\label{equationPsixt}
\frac{\partial \Psi_t (x)}{\partial t} 
= x \frac{\frac{\partial \Psi_t (x)}{\partial x}  } {\Psi_t(x)} - 1. 
\end{equation}
Moreover, if one makes the extra assumption 
that the density of $\nu_0$ is continuously differentiable on $(0,1)$, then the 
density $\psi$ is a continuously 
differentiable function in two variables on 
the set 
$$\{(t,x) \in [0,1) \times \mathbb{R}, 
x \in (0, A(1-t)) \},$$
satisfying the partial differential equation: 
\begin{equation}
\frac{\partial \psi}{\partial t}(x,t) 
= \frac{\partial}{\partial x} \left( 
\frac{\psi(x,t)}{\frac{1}{x} \int_0^x \psi(y,t) dy} 
\right),
\label{PDErotationalmaintheorem}
\end{equation}

\end{theorem}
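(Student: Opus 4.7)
I would reduce the convergence to a one-dimensional question about positive real roots, prove a sharp quantitative per-block estimate, iterate it $\lfloor tn\rfloor$ times to identify the limit, and extract the PDE statements from the resulting explicit formula. The reduction step uses Lemma~\ref{lemmaConstruction} iteratively: after $\ell m_n$ differentiations one has $P_{n,m_n}^{(\ell m_n)}(z)=Q^{[\ell]}(z^{m_n})$ for a polynomial $Q^{[\ell]}$ of degree $n-\ell$ with real positive roots $\tau_1^{[\ell]}<\dots<\tau_{n-\ell}^{[\ell]}$, while for $k\in\{1,\dots,m_n-1\}$ the polynomial $P_{n,m_n}^{(\ell m_n+k)}(z)$ equals $z^{m_n-k}$ times a polynomial in $z^{m_n}$ of degree $n-\ell-1$ whose positive roots interlace with those of $Q^{[\ell]}$. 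Consequently, the empirical measure of the roots of $P_{n,m_n}^{(\ell m_n+k)}$ differs by a Dirac mass of weight $O(1/n)$ at the origin from the pushforward of the empirical measure of the internal polynomial's roots under $x\mapsto x^{1/m_n}$, tensored with the uniform distribution on $m_n$-th roots of unity. It therefore suffices to identify the limit of the empirical distribution of the radii $R_j^{[\ell]}:=(\tau_j^{[\ell]})^{1/m_n}$; the intermediate values of $k$ contribute a vanishing L\'evy-distance perturbation controlled, in the spirit of Theorem~\ref{monotonicity}, by interlacing and the $O(1/n)$ degree gap.

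The technical heart of the proof, and what I expect to be the main obstacle, is a sharp single-block estimate: starting from $Q$ of degree $n$ with positive roots $s_1<\dots<s_n$, the roots $t_1<\dots<t_{n-1}$ of the polynomial obtained after one block of $m_n$ differentiations should satisfy $t_j^{1/m_n}=\bigl(1-1/(j+1)\bigr)\,s_{j+1}^{1/m_n}\,\bigl(1+\epsilon_n(j)\bigr)$ with an error small enough that compounding $\lfloor tn\rfloor$ blocks still yields $o(1)$ total error. My strategy relies on the structural observation that in the variable $y=x^{1/m_n}$ the operator $m_n x\,d/dx+c$ reduces to the Euler operator $y\,d/dy+c$, which is diagonal on $y^a$ with eigenvalue $a+c$. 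Consequently, the iteration $Q_{k+1}=(m_n-k)Q_k+m_n xQ_k'$ starting from $Q_1=m_nQ'$ admits the closed form $Q_{m_n}(x)=\sum_{a=0}^{n-1}b_a\pi_a x^a$, where $b_a$ is the coefficient of $x^a$ in $m_nQ'$ and $\pi_a=\prod_{c=1}^{m_n-1}(m_n a+c)$; iterating $\ell$ blocks yields an explicit formula for the coefficients of $Q^{[\ell]}$ as products of factorials times elementary symmetric polynomials $e_j$ in $(r_k^{m_n})$. Under the hypothesis $m_n/(n\log n)\to\infty$, combining the logarithmic asymptotics $\log\pi_a\sim m_n\log(m_n a)$ with a careful analysis of the ``near-maximal'' monomials dominating $e_j$, together with a saddle-point identification of the index $a^\ast(x)$ dominating the coefficient expansion, localises the roots of $Q^{[\ell]}$ at the predicted values. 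The interlacing bounds from Lemma~\ref{lemmaConstruction} and the monotonicity from Theorem~\ref{monotonicity} provide robust a~priori localisation; verifying that the accumulated error remains $o(1)$ after $\lfloor tn\rfloor$ iterations is the delicate point where the specific rate $m_n/(n\log n)\to\infty$ is used, precisely to absorb the polynomial-in-$n$ corrections arising from the number of near-maximal configurations in $e_j$.

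Once the per-block estimate is in hand, iterating the recursion $R_j^{[\ell+1]}\approx(1-1/(j+1))R_{j+1}^{[\ell]}$ and telescoping $\prod_{k=1}^{\lfloor tn\rfloor}\bigl(1-1/(j+k)\bigr)=j/(j+\lfloor tn\rfloor)$ yields $R_j^{[\lfloor tn\rfloor]}\approx \bigl(j/(j+\lfloor tn\rfloor)\bigr)\,r_{j+\lfloor tn\rfloor}^{(n)}$. Parametrising $j+\lfloor tn\rfloor=\lfloor nV\rfloor$ with $V$ uniform on $[t,1]$, the empirical distribution of $(R_j^{[\lfloor tn\rfloor]})$ converges to the law of $(1-t/V)\,q_{\nu_0}(V)$, which is~\eqref{nuV}. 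The quantile identity~\eqref{eqquantiles} follows from~\eqref{nuV} by a direct change of variables using the monotonicity of $v\mapsto(1-t/v)\,q_{\nu_0}(v)$ on $[t,1]$. Under the continuity and strict monotonicity hypothesis on $\Psi_0$, this map is itself a continuous strictly increasing bijection $[0,1-t]\to[0,A(1-t)]$, so $\Psi_t$ is its inverse, yielding~\eqref{Psi}. Differentiating the identity $\Psi_t(q_{(1-t)\nu_t}(x))=x$ with respect to $t$ and $x$, and eliminating $\partial_x q$ via the chain rule, produces~\eqref{equationPsixt}; under the additional $C^1$-density hypothesis, differentiating once more in $x$ and exchanging partial derivatives gives~\eqref{PDErotationalmaintheorem}.
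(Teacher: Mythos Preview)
Your architecture---the reduction to a one-dimensional dynamics on $Q^{[\ell]}$ via Lemma~\ref{lemmaConstruction}, the telescoping $\prod_{s=1}^{\ell}(1-1/(j+s))=j/(j+\ell)$, the identification of the limit as the law of $(1-t/V_t)q_{\nu_0}(V_t)$, and the chain-rule derivation of \eqref{equationPsixt} and \eqref{PDErotationalmaintheorem}---matches the paper's proof closely. Your observation that the one-block operator is diagonal on monomials, so that the coefficients of $Q^{[\ell]}$ are explicit factorial products times $e_{n-a-\ell}\bigl((r_j^{(n)})^{m_n}\bigr)$, is correct and elegant. The genuine divergence is in how the per-block root estimate is obtained. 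The paper never touches coefficients: it tracks roots directly through the logarithmic-derivative equation $q/z+\sum_k m/(z-r_k)=0$ (Lemmas \ref{upperbound} and \ref{lowerbound}), with bounds depending on the separation $\alpha=\max_j r_j/r_{j+1}$, and then---this is the key device---uses the monotonicity Lemma~\ref{monotonicitym} to replace the true radii $r_j^{(n)}$ by artificial comparison sequences $r_j^{(n)}\gamma^{\pm j}$ with $\gamma=e^{3m_n^{-1}\log n}$. These are well-separated by construction ($\alpha^{m_n}\leq n^{-3}$), while $\gamma^n\to 1$ exactly because $m_n/(n\log n)\to\infty$.

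Your coefficient route has a real gap at the step ``a saddle-point identification of the index $a^\ast(x)$\dots\ localises the roots of $Q^{[\ell]}$ at the predicted values.'' Passing from a coefficient profile to individual real root locations, uniformly in $1\leq j\leq n-\ell$ and with multiplicative error that survives $\lfloor tn\rfloor$ iterations, is not a standard lemma; identifying the dominant term at a point $x$ does not by itself place a root near $x$ for a real-rooted polynomial with alternating coefficients. More seriously, your control of $e_k\bigl((r_j^{(n)})^{m_n}\bigr)$ via ``near-maximal monomials'' presupposes separation of the $r_j^{(n)}$, but the theorem allows arbitrary increasing sequences, including many equal or clustered radii, where the number of maximal monomials is $\binom{n}{k}$ rather than polynomial in $n$. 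You invoke Theorem~\ref{monotonicity} only for ``a~priori localisation'', not as the mechanism that manufactures separation; the paper's proof shows that this monotone comparison with geometric sequences is precisely what makes the argument go through for general $\nu_0$. If you want to salvage the coefficient approach, you would likely need to combine your explicit formula with the same $\gamma^{\pm j}$ comparison, at which point the root-based bounds of Lemmas~\ref{upperbound}--\ref{lowerbound} become the more direct path.
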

\begin{comment}
\begin{remark}
    If $\nu_0$ is not absolutely continuous (e.g., has atoms) or $\psi_0$ is strictly positive, $\psi(x,t)$ may not be differentiable everywhere. The equation \eqref{PDErotational} still holds in the distributional sense. 
\end{remark}
\end{comment}
\subsection{Lemmas}
The proof of Theorem \ref{main} is obtained by applying a series of lemmas, stated below. We keep the notation of Lemma
\ref{lemmaConstruction}. 
\begin{lemma} \label{monotonicitym}
For fixed integers $n, m \geq 1$, $q \geq 1$, the 
map from $\mathbb{R}^n$ to 
$\mathbb{R}^n$ giving the nondecreasing sequence of roots of $S$ in terms of the nondecreasing sequence of roots of $Q$ is increasing for the partial order 
defined in Theorem \ref{monotonicity}. 
For fixed $n, m \geq 1$ and for $q =0$, the map 
from $\mathbb{R}^n$ to 
$\mathbb{R}^{n-1}$ giving the roots of $Q'$ is increasing for the same partial order. 
\end{lemma}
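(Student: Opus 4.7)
The plan is to reduce both cases to a direct application of Theorem \ref{monotonicity} with suitable weights, by recognizing $S$ (resp.\ $Q'$) as the polynomial appearing in that theorem for an appropriate tuple.

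For $q = 0$, the image polynomial is simply $Q'(x) = \sum_{j=1}^n \prod_{\ell \neq j}(x - z_\ell)$, which is the polynomial of Theorem \ref{monotonicity} with all weights $w_j = 1$. Hence monotonicity of the map sending $(z_1,\dots,z_n)$ to the nondecreasing sequence of roots of $Q'$ is an immediate consequence of that theorem.

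For $q \geq 1$, the idea is to augment the $n$-tuple of roots of $Q$ by an extra point at the origin. Setting $y_0 := 0$ and $y_j := z_j$ for $1 \leq j \leq n$ (these lie in $\Delta_{n+1}$ since the $z_j$ are nondecreasing and, by the context of Lemma \ref{lemmaConstruction}, positive), and choosing weights $w_0 := q$ and $w_1 = \cdots = w_n := m$ (all strictly positive), a direct expansion will give
\begin{equation*}
\sum_{j=0}^{n} w_j \prod_{\substack{0 \leq \ell \leq n \\ \ell \neq j}}(x - y_\ell) \;=\; q\prod_{j=1}^n (x - z_j) + m x \sum_{j=1}^n \prod_{\substack{1 \leq \ell \leq n \\ \ell \neq j}}(x - z_\ell) \;=\; q Q(x) + m x Q'(x) \;=\; S(x).
\end{equation*}
So the nondecreasing sequence of roots of $S$ is exactly the image of the augmented tuple $(0, z_1, \dots, z_n)$ under the map of Theorem \ref{monotonicity}.

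To conclude, I will observe that if $(z_1,\dots,z_n) \preccurlyeq (z_1',\dots,z_n')$ in $\Delta_n$, then the augmented tuples satisfy $(0,z_1,\dots,z_n) \preccurlyeq (0,z_1',\dots,z_n')$ in $\Delta_{n+1}$ (the first coordinate is fixed). Applying Theorem \ref{monotonicity} in dimension $n+1$ then yields coordinate-wise monotonicity of the $n$ roots of $S$, which is the claim. There is no real obstacle here: the only subtlety is checking that the algebraic identity above matches the polynomial of Theorem \ref{monotonicity} verbatim, and that the positivity of weights and the ordering $0 \leq z_1$ needed for the augmented tuple both hold under the standing hypotheses of Lemma \ref{lemmaConstruction}.
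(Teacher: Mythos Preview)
Your proof is correct and takes essentially the same approach as the paper: in both cases one augments the root tuple of $Q$ by an extra point at $0$ with weight $q$ (and weight $m$ on the remaining roots) so that $S(x)=qQ(x)+mxQ'(x)$ is recognized as the weighted-derivative polynomial of Theorem~\ref{monotonicity} in dimension $n+1$, while the $q=0$ case is the unweighted version in dimension $n$. The only cosmetic difference is indexing ($0,\dots,n$ versus $1,\dots,n+1$).
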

\begin{proof}
For $q \geq 1$, let $z_2 \leq z_3\leq \dots \leq z_{n+1}$
be the roots of $Q$, counted with multiplicity, 
and let $z_{1} := 0$.
We have 
$$S(z) = q Q(z) + m z Q'(z)
= q \prod_{1 \leq j \leq n+1, j \neq 1} 
(z - z_j) 
+ m \sum_{2 \leq \ell \leq n+1} 
\prod_{1 \leq j \leq n+1, j \neq \ell} 
(z - z_j).
$$
Here, we have written the factor $z$ of the second term as $z - z_1$. 
We deduce the lemma from Theorem \ref{monotonicity}
applied to polynomials of degree $n+1$, 
$w_1 = q$ and $w_j = m$ for $2 \leq j \leq n+1$. 
Similarly, the case $q = 0$ is solved by applying 
Theorem \ref{monotonicity}
to polynomials of degree $n$ and weights all equal to $1$. 
\end{proof}

\begin{lemma} \label{upperbound}
For an increasing sequence $(r_j)_{1 \leq j \leq n}$ of positive reals, we assume that for $1 \leq j \leq n$, 
the $j$-th smallest root of $Q$, counted with multiplicity, is at most $r_j$. 
Then, for $1 \leq q \leq m-1$, $1 \leq j \leq n$, 
the $j$-th smallest root of $S$ is 
at most $r_j$ and also at most $ j r_j/((j+1)(1 - \alpha))$, 
where $\alpha$ is the maximum of $r_j/r_{j+1}$
for $1 \leq j \leq n-1$. For $q = 0$, 
$2 \leq j \leq n$, 
the $(j-1)$-th smallest root of $Q'$ is 
at most $r_j$ and at most $ j r_{j}/((j+1)(1 - \alpha))$.
\end{lemma}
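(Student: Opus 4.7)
The plan is to use Lemma \ref{monotonicitym} to reduce both bounds to the comparison polynomial $\tilde{Q}$ with $r_1, \dots, r_n$ as exact roots: by monotonicity, replacing each root of $Q$ by the (possibly larger) $r_k$ can only raise the $j$-th root of $S$ (resp.\ the $(j-1)$-th root of $Q'$), so any upper bound proved for $\tilde{Q}$ transfers to $Q$. The weak bound $r_j$ then follows immediately from the interlacing part of Lemma \ref{lemmaConstruction}, which places the relevant root in $[r_{j-1}, r_j]$ with the convention $r_0 := 0$.

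For the sharper bound, set $\beta_0 := j/((j+1)(1-\alpha))$. When $\beta_0 \geq 1$ the claim is already contained in the weak bound, so assume $\alpha < 1/(j+1)$. Using $r_{j-1}/r_j \leq \alpha$ and $\beta_0 > \alpha$ (which follows from $\alpha(j+1) < 1$), the point $\beta_0 r_j$ lies in $(r_{j-1}, r_j)$. The rational function
\begin{equation*}
F(x) := \frac{q}{mx} + \sum_{k=1}^{n}\frac{1}{x - r_k}
\end{equation*}
(drop the first term when $q=0$) has $F' < 0$ hence is strictly decreasing on $(r_{j-1}, r_j)$ and vanishes at the root we wish to bound, so it suffices to show $F(\beta_0 r_j) \leq 0$.

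To estimate $F(\beta_0 r_j)$, iterate $r_k/r_{k+1} \leq \alpha$ to get $r_k \leq \alpha^{j-k} r_j$ for $k \leq j$, which gives
\begin{equation*}
\sum_{k<j}\frac{1}{\beta_0 r_j - r_k} \leq \frac{1}{r_j}\sum_{\ell=1}^{j-1}\frac{1}{\beta_0 - \alpha^\ell},
\end{equation*}
while retaining only the $k=j$ term in the opposing sum produces $\sum_{k\geq j}1/(r_k - \beta_0 r_j) \geq 1/(r_j(1-\beta_0))$. Combining these estimates with $q/m \leq 1$ (valid whether $q = 0$ or $1 \leq q \leq m-1$) reduces $F(\beta_0 r_j) \leq 0$ to the scalar inequality
\begin{equation*}
\frac{1}{\beta_0} + \sum_{\ell=1}^{j-1}\frac{1}{\beta_0 - \alpha^\ell} \leq \frac{1}{1-\beta_0}.
\end{equation*}

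The main obstacle is verifying this scalar inequality cleanly. I would observe that its left-hand side minus its right-hand side equals $P'(\beta_0)/P(\beta_0)$, where $P(x) := x\prod_{\ell=0}^{j-1}(x-\alpha^\ell)$ is the monic polynomial of degree $j+1$ whose roots are $\{0, 1, \alpha, \alpha^2, \dots, \alpha^{j-1}\}$. On $(\alpha, 1)$ the logarithmic derivative $P'/P$ is strictly decreasing from $+\infty$ to $-\infty$ and vanishes at the largest critical point $c_j$ of $P$, so the inequality is equivalent to $\beta_0 \geq c_j$. Since all critical points of $P$ are strictly positive, Vieta's formulas applied to $P'$ give
\begin{equation*}
c_j \leq \sum_{k=1}^{j} c_k = \frac{j}{j+1}\sum_{\ell=0}^{j-1}\alpha^\ell = \frac{j(1-\alpha^j)}{(j+1)(1-\alpha)} \leq \beta_0,
\end{equation*}
which closes the argument.
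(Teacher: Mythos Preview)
Your argument is correct. The reduction via Lemma~\ref{monotonicitym} and the weak bound via Lemma~\ref{lemmaConstruction} match the paper exactly, but for the sharper bound the two proofs diverge. The paper truncates: for the index $p$ under consideration it drops all terms $k>p$ from the rational function \eqref{rootsS}, which can only push the relevant root to the right, and then applies Vieta directly to the truncated degree-$p$ equation to get $\sum_{k\le p} z'_k=\frac{q+m(p-1)}{q+mp}\sum_{k\le p} r_k$; bounding $z_p$ by this sum and using the geometric estimate $\sum_{k\le p} r_k\le r_p/(1-\alpha)$ finishes the job in two lines. You instead keep the full rational function, evaluate it at the target $\beta_0 r_j$, and use $r_k\le\alpha^{j-k}r_j$ to reduce to a scalar inequality in $\alpha$ and $\beta_0$ alone, which you then recognize as $P'(\beta_0)/P(\beta_0)\le 0$ for the extremal comparison polynomial $P(x)=x\prod_{\ell=0}^{j-1}(x-\alpha^\ell)$; Vieta on $P'$ bounds its largest root by $\frac{j(1-\alpha^j)}{(j+1)(1-\alpha)}\le\beta_0$. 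Both arguments ultimately exploit positivity of the remaining critical points to bound one root by the sum of all, and both use the same geometric control on the $r_k$; the paper's route is shorter and avoids the auxiliary polynomial, while yours makes the worst-case geometric-progression configuration explicit and is closer in spirit to a direct comparison argument. One small imprecision: for $j=1$ the endpoint $\alpha$ is not a pole of $P'/P$, so the ``from $+\infty$'' claim fails, but since $c_1=1/2>\alpha$ and $P'/P$ is still strictly decreasing on $(\alpha,1)$, the equivalence $\beta_0\ge c_j$ and the Vieta bound go through unchanged.
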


\begin{proof}
By Lemma \ref{monotonicitym}, 
we can assume that the $j$-th smallest root of $Q$
is exactly $r_j$. 
By the intermediate value theorem, for $q \geq 1$, 
the roots of $S$ are given by an increasing
sequence $(z_k)_{1 \leq k \leq n}$
such that 
$$r_0 := 0 < z_1 < r_1 < z_2 < r_2 < \dots
< z_n < r_n$$
and 
\begin{equation} \frac{q}{z_k} + \sum_{j=1}^n \frac{m}{z_k - r_j} 
= 0. \label{rootsS}
\end{equation}
For $q = 0$, the roots of 
$Q'$ are $(z_k)_{2 \leq k \leq n}$
where 
$$ r_1 < z_2 < r_2 < \dots
< z_n < r_n$$
and \eqref{rootsS} is satisfied.  
This gives the upper bound $r_j$. 

If for $1 \leq p \leq n$, we discard the terms $j > p$, 
the left-hand side of \eqref{rootsS} above  
increases when $k \leq p$, because $z_k - r_j < 0 $ for $j > p$. Hence, 
$$\frac{q}{z_k} + \sum_{j = 1}^{p} \frac{m}{z_k - r_j}  \geq  0.$$
We then have $z_k \leq z'_k$, where 
$ r_{k-1} < z'_k < r_k$ and 
$$\frac{q}{z'_k} + \sum_{j = 1}^{p} \frac{m}{z'_k   - r_j}  =  0$$
 i.e. 
 $$q \prod_{j=1}^{p} (z'_k - r_j)
 + m z'_k \sum_{j=1}^{p} \prod_{1 \leq \ell  \leq p, \ell \neq j}  (z'_k  - r_\ell) = 0.$$
This equation in $z'_k$ has degree $p$:
for $q \geq 1$, this degree is the number of solutions $z'_k$ we are considering, for $q = 0$, we look for 
$p -1$ solutions $z'_k$ since we need $2 \leq k \leq p$: in this case, the equation has exactly one more solution, namely zero.  
In all cases, the sum of $z'_k$ for $k \leq p$ is the sum of all solutions of the equation, and then looking at the two highest degree coefficients, we get: 
$$\sum_{k \leq p} z'_k 
= \frac{1}{q + mp} \left( q \sum_{j=1}^p r_j 
+ m \sum_{j=1}^p \sum_{1 \leq \ell \leq p, \ell \neq j} r_{\ell}\right) = \frac{q + m(p-1)}{q + mp} 
\sum_{j \leq p} r_j.$$
We deduce 
$$ \sum_{j \leq p} z_j \leq \left(1 - \frac{m}{q + mp} \right) \sum_{j \leq p} r_j.$$
Hence, taking only one term in the left-hand side 
and using the fact that $r_j \leq \alpha r_{j+1}$ 
for $1 \leq j \leq n-1$
by definition of $\alpha$, 
$$z_p \leq 
\left(1 - \frac{m}{q + mp} \right) 
\frac{r_p}{1 - \alpha} 
\leq \left(1 - \frac{m}{m + mp} \right) 
\frac{r_p}{1 - \alpha},
$$
which proves the lemma. 
\end{proof}
\begin{lemma} \label{lowerbound}
For an increasing sequence $(r_j)_{1 \leq j \leq n}$ of positive reals, we assume that for $1 \leq j \leq n$, 
the $j$-th smallest root of $Q$, counted with multiplicity, is at least $r_j$. 
Then, for $1 \leq q \leq m-1$, $1 \leq j \leq n$, 
the $j$-th smallest root of $S$ is 
at least $ \beta r_j$, where 
$$\beta =  1 - \frac{1}{ \max \left(1,  j-1 - \frac{2 + \log_- (\log (\alpha^{-1}) )}{\log (\alpha^{-1})}   \right)}, $$
$\log_-(x):=\max(0,-\log x)$, $\alpha$ being the maximum of $r_j/r_{j+1}$
for $1 \leq j \leq n-1$. For $q = 0$, 
$2 \leq j \leq n$, 
the $(j-1)$-th smallest root of $Q'$ is 
at least $ \beta r_j$. 
\end{lemma}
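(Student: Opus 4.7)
The plan is to reduce to the case where the roots of $Q$ are exactly $r_1 < \cdots < r_n$, then work directly with the rational equation \eqref{rootsS} characterizing the relevant root of $S$ (or $Q'$), and finally lower bound its position via a splitting of the sum into a dominant positive part coming from the lower-indexed roots and a controllable negative part coming from the higher-indexed roots, which decays geometrically with ratio $\alpha$.

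First, by Lemma \ref{monotonicitym}, replacing the actual roots of $Q$ by the smaller values $r_1 \leq \cdots \leq r_n$ can only decrease the $j$-th smallest root of the resulting $S$ (or $Q'$); hence it suffices to prove the bound assuming the roots of $Q$ are exactly $r_1 < \cdots < r_n$. Let $z_j$ denote the $j$-th root of $S$ for $q \geq 1$ (so $z_j \in (r_{j-1}, r_j)$ with $r_0 := 0$) or the $(j-1)$-th root of $Q'$ for $q=0$ (so $z_j \in (r_{j-1}, r_j)$, requiring $j \geq 2$). Writing $\epsilon := r_j - z_j > 0$ and isolating the $k = j$ term in \eqref{rootsS}, I would rewrite the equation as
\[
\frac{m}{\epsilon} \;=\; \frac{q}{r_j - \epsilon} + \sum_{k=1}^{j-1} \frac{m}{r_j - \epsilon - r_k} \;-\; \sum_{k=j+1}^{n} \frac{m}{r_k - r_j + \epsilon}.
\]

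Next I would bound each contribution on the right. The first term is nonnegative and can be dropped. For $k < j$, the inequality $r_j - \epsilon - r_k \leq r_j$ yields $m/(r_j - \epsilon - r_k) \geq m/r_j$, so the second sum is at least $m(j-1)/r_j$. For $k > j$, the definition of $\alpha$ iterated gives $r_k \geq r_j \alpha^{-(k-j)}$, hence $r_k - r_j + \epsilon \geq r_j(\alpha^{-(k-j)} - 1)$, so that (setting $\gamma := \log(\alpha^{-1}) > 0$) the third sum is at most
\[
\frac{m}{r_j} \sum_{\ell=1}^{n-j} \frac{\alpha^{\ell}}{1-\alpha^{\ell}} \;\leq\; \frac{m}{r_j}\, T(\gamma), \qquad T(\gamma) := \sum_{\ell=1}^{\infty} \frac{1}{e^{\gamma \ell}-1}.
\]
Combining, $\dfrac{m}{\epsilon} \geq \dfrac{m}{r_j}\bigl(j-1 - T(\gamma)\bigr)$.

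The main analytic task is to prove the estimate
\[
T(\gamma) \;\leq\; \frac{2 + \log_{-}(\gamma)}{\gamma}.
\]
My plan is to split at $\ell_0 := \lceil 1/\gamma \rceil$: for $\ell \leq \ell_0$, using $1/(e^x-1) \leq 1/x$ gives a head bound of $(1/\gamma) H_{\ell_0}$, of order $(1 + \log_-(\gamma))/\gamma$; for $\ell > \ell_0$, using $1/(e^x-1) \leq 2 e^{-x}$ (valid for $x \geq \log 2$) together with a geometric sum yields a tail bound of order $1/\gamma$. I expect the main obstacle to be tuning these elementary inequalities to land exactly on the constant $2$ in the statement rather than a larger absolute constant; this may require a more careful choice of split point or exploiting the expansion $1/(e^x - 1) = 1/x - 1/2 + O(x)$ near $0$ to absorb the $-1/2$ correction into the tail.

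Once this bound is established, we obtain $\dfrac{m}{\epsilon} \geq \dfrac{m}{r_j}\, M$ with $M := \max\bigl(1,\; j - 1 - (2 + \log_-(\gamma))/\gamma\bigr)$, the trivial lower bound $m/\epsilon \geq m/r_j$ (coming from $\epsilon \leq r_j$) handling the regime where the bracket drops below $1$. This yields $\epsilon \leq r_j / M$ and therefore $z_j = r_j - \epsilon \geq (1 - 1/M) r_j = \beta r_j$, as required; the degenerate case $M = 1$ reduces to the trivial inequality $z_j \geq 0$.
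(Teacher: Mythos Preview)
Your proposal is correct and follows essentially the same route as the paper: reduce via Lemma~\ref{monotonicitym} to the case where the roots of $Q$ are exactly the $r_j$, isolate the $k=j$ term in \eqref{rootsS}, lower bound the contribution of indices $k<j$ by $(j-1)m/r_j$, and upper bound the tail $k>j$ by the geometric-type sum $T(\gamma)=\sum_{\ell\ge 1}1/(e^{\gamma\ell}-1)$ using $r_k\ge r_j\alpha^{-(k-j)}$. The conclusion $z_j\ge\beta r_j$ then follows exactly as you write.

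The only place where the paper differs from your plan is in the estimate $T(\gamma)\le (2+\log_-\gamma)/\gamma$. Rather than splitting the sum at $\ell_0=\lceil 1/\gamma\rceil$, the paper keeps the first term and bounds the rest by the integral $\int_1^\infty d\ell/(e^{\gamma\ell}-1)$, then substitutes $y=\gamma\ell$ and splits the resulting integral at $y=1$, using $1/(e^y-1)\le 1/y$ on $(\min(\gamma,1),1)$ and $1/(e^y-1)\le 2e^{-y}$ on $(1,\infty)$. This yields
\[
T(\gamma)\le \frac{1}{e^\gamma-1}+\frac{\log_-\gamma+2/e}{\gamma}\le \frac{1}{\gamma}+\frac{\log_-\gamma+2/e}{\gamma}\le \frac{2+\log_-\gamma}{\gamma},
\]
since $e^\gamma-1\ge\gamma$ and $1+2/e<2$. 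Your sum-splitting approach, carried out with the crude bounds $H_{\ell_0}\le 1+\log\ell_0$ and the geometric tail, lands near $(2.4+\log_-\gamma)/\gamma$, confirming your suspicion that hitting the constant $2$ requires either this integral-comparison trick or the $-1/2$ correction you mention. So there is no genuine gap---only the constant tuning, for which the paper's device is the cleanest fix.
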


\begin{proof}
By Lemma \ref{monotonicitym}, we can again assume that the $j$-th smallest root of $Q$ is exactly $r_j$.
We can also assume $2 \leq j \leq n$, since $\beta = 0$ for $j = 1$. 
Keeping the notation of the proof of Lemma \ref{upperbound}, we deduce, from \eqref{rootsS},
\begin{align} 
\frac{q + m (j-1)}{ r_{j}} +
\frac{m}{z_j - r_j} + \sum_{j +1 \leq \ell \leq n}
\frac{m}{r_j - r_\ell}< \frac{q}{z_j}+ \sum_{\ell=1}^{j-1}\frac{m}{z_j-r_{\ell}}+\frac{m}{z_j-r_j}+ \sum_{j +1 \leq \ell \leq n}
\frac{m}{z_j - r_\ell} = 0
\end{align}
for $2 \leq j \leq n$. 
 Since $r_\ell \geq r_j \alpha^{- (\ell-j)} $
 for $\ell \geq j$,
$$ q + m (j-1)  +
\frac{m}{(z_j/r_j) -1} - \sum_{\ell = 1}^{\infty}
\frac{m}{ \alpha^{- \ell}- 1} \leq 0.$$
The last sum is at most 
$$\frac{m}{\alpha^{-1} - 1}
+ \int_{1}^{\infty} \frac{m}{ e^{ x \log (1/\alpha)} - 1} dx
= \frac{m}{\alpha^{-1} - 1}
+ \int_{ \log (1/\alpha)}^{\infty} \frac{m \, dy}{\log (1/\alpha) (e^{y} - 1)} 
$$ $$\leq \frac{m}{\alpha^{-1} - 1} 
+ \frac{m} {\log (1/\alpha)} \left(\int_{\min(\log (1/\alpha), 1)}^1 \frac{dy}{y} + \int_1^{\infty} \frac{dy}{e^y/2} \right) 
$$ $$ \leq \frac{ m \left(2 + 
\log_- ( \log (\alpha^{-1})) \right)}{\log (\alpha^{-1})}.$$
We deduce 
$$ q + m (j-1) -\frac{ m \left(2 + 
\log_- (  \log (\alpha^{-1})) \right)}{\log (\alpha^{-1})} \leq \frac{m}{1 - (z_j/r_j)},  $$
$$j-1 - \frac{  \left(2 + 
\log_- (  \log (\alpha^{-1})) \right)}{\log (\alpha^{-1})} \leq \frac{1}{1 - (z_j/r_j)},
$$
and then, in the case where $\beta > 0$
(the case $\beta = 0$ is trivial), 
$$\frac{1}{1-\beta} \leq \frac{1}{1 - (z_j/r_j)},$$
which proves the lemma. 
\end{proof}
We now state a lemma estimating the effect of $m$ differentiations. We keep the notation of Section \ref{3}. 
\begin{lemma} \label{boundsmderivations}
 The $m$-th derivative of $P_{n,m}$
can be written as 
$$P_{n,m}^{(m)} 
= \frac{(nm)!}{((n-1)m)!} 
\prod_{j=1}^{n-1} (z^m - (R_j^{(n)})^m)$$
for a nondecreasing sequence 
$(R_j^{(n)})_{1 \leq j \leq n-1}$ of positive numbers. Let $(r_j)_{1 \leq j \leq n}$ be an increasing sequence of positive real numbers, and 
let $\alpha$ be the maximum of $r_j/r_{j+1}$ for 
$1 \leq j \leq n-1$. 
If $r_j^{(n)} \leq r_j$ for $1 \leq j \leq n$,
then 
for $1 \leq j \leq n-1$, 
$$R_j^{(n)} \leq \min \left(1, \frac{j+1}{(j+2)(1- \alpha^m)} \right) r_{j+1}.$$
If $r_j^{(n)} \geq r_j$ for $1 \leq j \leq n$, 
then 
for $1 \leq j \leq n-1$, 
$$ R_j^{(n)} \geq \left( 1 - \frac{1}{ \max \left(1,  j - 1 - \frac{2 + \log_- (m \log (\alpha^{-1}) )}{m \log (\alpha^{-1})}   \right)} \right) r_{j+1}.$$
\end{lemma}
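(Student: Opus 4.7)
The plan is to unfold the $m$-fold differentiation of $P_{n,m}$ through the sequence of polynomials $(Q_k)_{0 \leq k \leq m}$ introduced at the end of Section \ref{3}, then read off bounds on the roots of $Q_m$ from iterated applications of Lemmas \ref{upperbound} and \ref{lowerbound}. Set $Q_0(x) = \prod_{j=1}^n (x - (r_j^{(n)})^m)$, so that $P_{n,m}(z) = Q_0(z^m)$. An induction on $k$, whose inductive step is furnished by Lemma \ref{lemmaConstruction} applied to $P_{n,m}^{(k)}(z) = z^{m-k} Q_k(z^m)$ with $q = m-k$, shows that this ansatz persists for $1 \leq k \leq m$, with $Q_1 = m Q_0'$ and $Q_{k+1}(x) = m x Q_k'(x) + (m-k) Q_k(x)$. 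At $k = m$ one obtains $P_{n,m}^{(m)}(z) = Q_m(z^m)$; the interlacing part of Lemma \ref{lemmaConstruction}, applied inductively, gives that the roots of each $Q_k$ are positive real, so those of $Q_m$ may be written in increasing order as $(R_j^{(n)})^m$ for $1 \leq j \leq n-1$. Tracking leading coefficients along the recursion ($Q_1$ has leading coefficient $mn$, and $Q_{k+1}$ has leading coefficient $(mn-k)$ times that of $Q_k$) produces the constant $(mn)(mn-1)\cdots(mn-m+1) = (nm)!/((n-1)m)!$.

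For the upper bound, assume $r_j^{(n)} \leq r_j$, so that the $j$-th root of $Q_0$ is at most $r_j^m$ and the ``$\alpha$'' appearing in Lemma \ref{upperbound} equals $\max_j (r_j/r_{j+1})^m = \alpha^m$. Applying Lemma \ref{upperbound} with $q = 0$ at the step $Q_0 \to Q_1$ bounds the $(j-1)$-th root of $Q_1$ by $\min(r_j^m,\, j r_j^m / ((j+1)(1-\alpha^m)))$. For $k = 1, \dots, m-1$ the lemma is then applied with $q = m - k \in \{1, \dots, m-1\}$; the ``$\leq r_j$'' clause of Lemma \ref{upperbound} propagates the upper bound on the $i$-th root unchanged from $Q_k$ to $Q_{k+1}$. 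After all $m$ applications one has $(R_i^{(n)})^m \leq \min\bigl(r_{i+1}^m,\, (i+1) r_{i+1}^m /((i+2)(1-\alpha^m))\bigr)$, and extracting $m$-th roots (together with the monotonicity of $x \mapsto x^{1/m}$) yields the stated upper bound.

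The lower bound is obtained by the symmetric argument, with Lemma \ref{lowerbound} replacing Lemma \ref{upperbound}. The $q = 0$ step produces the factor $\beta$ of that lemma with ``$\alpha$'' instantiated as $\alpha^m$, which accounts for the $m\log(\alpha^{-1})$ appearing in the statement. The main technical obstacle lies in the remaining $m-1$ iterations with $q \geq 1$: each individual invocation of Lemma \ref{lowerbound} in isolation reduces the lower bound by a multiplicative factor strictly less than $1$, and one must show that these factors combine into the single constant $\beta$ displayed in the conclusion. The key quantitative point is that the ratios of successive lower bounds on the roots of $Q_k$ remain controlled by the original $\alpha^m$, which keeps the $\beta$ of Lemma \ref{lowerbound} close to $1$ at each subsequent step, so the cumulative loss is absorbed by the factor coming from the initial $q=0$ step. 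Extracting $m$-th roots at the end converts the bounds on $(R_j^{(n)})^m$ into the claimed bounds on $R_j^{(n)}$.
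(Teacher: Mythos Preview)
Your setup (the recursion on $Q_k$, the leading coefficient, the positivity/interlacing of roots) is correct and matches the paper. The genuine gap is in the upper bound step, and the same issue recurs in your lower bound sketch.

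You apply the nontrivial clause of Lemma \ref{upperbound} only once, at the $q=0$ transition $Q_0 \to Q_1$, and then use the trivial clause ``$j$-th root of $S$ is $\leq r_j$'' to carry the bound unchanged through the remaining $m-1$ steps. This yields
\[
(R_i^{(n)})^m \;\leq\; \min\!\left(1,\ \frac{i+1}{(i+2)(1-\alpha^m)}\right) r_{i+1}^m .
\]
Taking $m$-th roots then gives $R_i^{(n)} \leq c^{1/m}\, r_{i+1}$ with $c = \min\bigl(1,\tfrac{i+1}{(i+2)(1-\alpha^m)}\bigr)$, which is \emph{weaker} than the claimed $R_i^{(n)} \leq c\, r_{i+1}$ whenever $c<1$ (since $c^{1/m}>c$). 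Monotonicity of $x\mapsto x^{1/m}$ does not rescue this.

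The paper's proof fixes exactly this by applying the nontrivial clause at \emph{every} step: one checks that the successive upper bounds on the roots of $Q_k$ still have consecutive ratios bounded by $\alpha^m$ (because the extra factors $\min(1,\tfrac{j}{(j+1)(1-\alpha^m)})$ and $\min(1,\tfrac{j+1}{(j+2)(1-\alpha^m)})$ are nondecreasing in $j$), so the ``$\alpha$'' of Lemma \ref{upperbound} never increases along the iteration. This accumulates $m$ nontrivial factors and produces
\[
(R_j^{(n)})^m \;\leq\; \min\!\left(1,\ \tfrac{j+1}{(j+2)(1-\alpha^m)}\right)\,\min\!\left(1,\ \tfrac{j}{(j+1)(1-\alpha^m)}\right)^{m-1} r_{j+1}^m \;\leq\; \min\!\left(1,\ \tfrac{j+1}{(j+2)(1-\alpha^m)}\right)^{m} r_{j+1}^m,
\]
after which the $m$-th root gives the stated bound. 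The lower bound is obtained symmetrically: each of the $m$ applications of Lemma \ref{lowerbound} contributes a factor $\geq \beta_j$ (the same $\beta$, because the ratio condition is preserved), so the product is $\geq \beta_j^m$ and the $m$-th root returns $\beta_j$. Your description ``the cumulative loss is absorbed by the factor coming from the initial $q=0$ step'' is not the right mechanism; rather, every step contributes the same factor and the $m$-th root undoes the $m$-fold product.
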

\begin{proof}

We have 
$$P_{n,m}(z) = Q_0(z^m)$$
where 
$$Q_0(z) = \prod_{j=1}^n (z - (r_j^{(n)})^m).$$
Iterating $m$ times the computation in Lemma \ref{lemmaConstruction}, we find that for $1 \leq k \leq m$, 
$$P_{n,m}^{(k)} = z^{m-k} Q_k (z^m),$$
where the polynomials $(Q_k)_{1 \leq k \leq n}$ have degree $n-1$ and satisfy $Q_1 = m Q_0'$, and 
$$Q_{k+1} (z) = (m-k) Q_k(z) + m z Q_k'(z) $$
for $1 \leq k \leq m-1$. 
Iterating $m$ times Lemma \ref{upperbound}, with values of $q$ successively
equal to $0, m-1, m-2, \dots, 2, 1$, we
deduce the general form of the factorization of the polynomial $P_{n,m}^{(m)}$. 

Let us now assume $r_j^{(n)} \leq r_j$ for $1 \leq j \leq n$: in this case, the $j$-th smallest root 
of $Q_0$ is bounded by $r_j^m$ for $1 \leq j \leq n$.  
Iterating $m$ times Lemma \ref{upperbound},
we deduce, for $1 \leq j \leq n-1$, 
successive upper bounds 
on the $j$-th smallest root 
of $Q_1, Q_2, \dots, Q_m$, from the 
fact that the $(j+1)$-th smallest root  
of $Q_0$ is at most $r_{j+1}^m$. 
More precisely, we get by induction, that
for $1 \leq k \leq m$, the $j$-th smallest root 
of $Q_k$ is at most 
$$ \min \left(1, \frac{j+1}{(j+2)(1-\alpha^m)} \right) \, \min \left(1, \frac{j}{(j+1)(1-\alpha^m)}  \right)^{k-1} r_{j+1}^m.$$
Notice that in this induction, we use the fact that the ratio between these upper bounds for consecutive values of $j$ always remains bounded 
by $\alpha^m$, which is true because 
$r_j^m /r_{j+1}^m \leq \alpha^m$ by assumption, 
and $j/(j+1)$, $(j+1)/(j+2)$ are increasing in $j$. Since the $j$-th smallest root of $Q_m$ is
$(R_j^{(n)})^m$, we have 
$$ (R_j^{(n)})^m
\leq \min \left(1, \frac{j+1}{(j+2)(1-\alpha^m)} \right) \, \min \left(1, \frac{j}{(j+1)(1-\alpha^m)}  \right)^{m-1} r_{j+1}^m,$$
$$ (R_j^{(n)})^m
\leq \min \left(1, \frac{j+1}{(j+2)(1-\alpha^m)} \right)^m r_{j+1}^m,
$$
which proves the upper bound of Lemma \ref{boundsmderivations}. The lower bound is 
exactly proven in the same way, 
using Lemma \ref{lowerbound} instead of Lemma \ref{upperbound}. 

\end{proof}
\begin{lemma} \label{46}
Under the assumption of Theorem \ref{main}, 
we have for fixed $t \in (0,1)$ and $n$ sufficiently large, $$\lfloor n m_n t \rfloor = \ell m_n - q$$
where $1 \leq \ell \leq n-1$,  $nt - 1 \leq \ell \leq nt + 1$, $0 \leq q \leq m_n-1$.
Moreover, the roots of the $\lfloor n m_n t \rfloor$-th derivative of $P_{n,m_n}$, repeated 
according to their multiplicity, are
$0$ with multiplicity $q$, and 
$s_j e^{2i \pi k/m_n}$
for $1 \leq j \leq n-\ell$, $0 \leq k \leq m_n-1$, where for $4 \leq j \leq n-\ell$, 
$$e^{-\eta_j} \frac{j-1}{j-1 + nt} 
r_{j+\ell}^{(n)} \leq s_j \leq 
e^{\eta_j} \frac{j-1}{j-1 + nt} r^{(n)}_{j+\ell},$$
$(\eta_j)_{j \geq 4}$ being a decreasing sequence 
depending only on the sequence $(m_n)_{n \geq 1}$ and tending to zero when $j \rightarrow \infty$.

\end{lemma}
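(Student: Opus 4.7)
The plan is to identify the structural form of $P_{n,m_n}^{(\lfloor nm_n t\rfloor)}$ via iterated application of Lemma \ref{lemmaConstruction}, then to telescope the estimates of Lemma \ref{boundsmderivations} across $\ell-1$ complete sweeps of $m_n$ differentiations followed by one partial sweep of $m_n-q$ differentiations, and finally to use the hypothesis $m_n/(n\log n)\to\infty$ to absorb the accumulated errors into the sequence $\eta_j$.

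From $\lfloor nm_n t\rfloor = \ell m_n - q$ with $0\le q\le m_n-1$ one uniquely determines $\ell = \lceil\lfloor nm_n t\rfloor/m_n\rceil$; dividing $nm_n t - 1 \le \lfloor nm_n t\rfloor \le nm_n t$ by $m_n$ and taking the ceiling gives $nt - 1\le \ell\le nt + 1$ for large $n$. Setting $Q^{[0]}(w):=\prod_{j=1}^n(w-(r_j^{(n)})^{m_n})$, so that $P_{n,m_n}(z)=Q^{[0]}(z^{m_n})$, I decompose $\lfloor nm_n t\rfloor = (\ell-1)m_n + (m_n - q)$. By iterating Lemma \ref{lemmaConstruction}, after $\lambda$ complete sweeps of $m_n$ differentiations the polynomial retains the form $Q^{[\lambda]}(z^{m_n})$ up to a positive constant, with $Q^{[\lambda]}$ of degree $n-\lambda$ and positive real roots $(\rho_j^{[\lambda]})^{m_n}$. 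For the final sweep, iterating Lemma \ref{lemmaConstruction} with the recurrence $Q_{k+1}(z) := (m_n-k)Q_k(z) + m_n z Q_k'(z)$ starting from $Q_1 := m_n(Q^{[\ell-1]})'$, the $(m_n-q)$-th iterate delivers the final polynomial in the form $z^q Q_{m_n-q}(z^{m_n})$ up to a constant, with $Q_{m_n-q}$ of degree $n-\ell$ and positive real roots $s_j^{m_n}$. The described root set then follows.

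Next, iterating the upper bound of Lemma \ref{boundsmderivations} over the $\ell-1$ complete sweeps, and invoking Lemma \ref{monotonicitym} to dominate each successive maximum-ratio $\alpha^{[\lambda]}$ by a quantity comparable to $\alpha_0 := \max_j r_j^{(n)}/r_{j+1}^{(n)}$, I obtain
\begin{equation*}
\rho_j^{[\ell-1]} \;\le\; \frac{j+1}{j+\ell}\,\prod_{\lambda=0}^{\ell-2}\frac{1}{1-(\alpha^{[\lambda]})^{m_n}}\cdot r_{j+\ell-1}^{(n)}.
\end{equation*}
The intermediate estimate inside the proof of Lemma \ref{boundsmderivations} applied to the final partial sweep bounds $s_j^{m_n}$ by $((j+1)/(j+2))(j/(j+1))^{m_n-q-1}(1-(\alpha^{[\ell-1]})^{m_n})^{-(m_n-q)}(\rho_{j+1}^{[\ell-1]})^{m_n}$. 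Combining, taking $m_n$-th roots, and using $q/m_n = \ell - nt + O(1/m_n)$, I arrive at
\begin{equation*}
s_j \;\le\; \Bigl(\tfrac{j}{j+1}\Bigr)^{nt-\ell+1}\,\frac{j+2}{j+\ell+1}\, r_{j+\ell}^{(n)}\,\exp(\varepsilon_n),
\end{equation*}
with $\varepsilon_n = O(n\alpha_0^{m_n})$ uniform in $j$. A Taylor expansion of the logarithm then gives $(j/(j+1))^{nt-\ell+1}(j+2)/(j+\ell+1) = ((j-1)/(j-1+nt))\exp(\delta_j)$ with $|\delta_j| = O(1/j)$ uniformly for $\ell\in[nt-1,nt+1]$. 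The lower bound follows symmetrically from Lemma \ref{lowerbound} and the lower half of Lemma \ref{boundsmderivations}; the weaker factor $1 - 1/\max(1, j-1-C^{[\lambda]})$ with $C^{[\lambda]} = (2 + \log_-(m_n\log(\alpha^{[\lambda]})^{-1}))/(m_n\log(\alpha^{[\lambda]})^{-1})$ contributes an extra telescoped correction of size $O(1/j)$, absorbable into $\eta_j$ provided $\sup_\lambda C^{[\lambda]}$ is small compared to $j$.

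The main obstacle is the uniform control of $\varepsilon_n$ and of the $C^{[\lambda]}$ corrections in $n$. For typical samplings, the spacing gives $\alpha_0 \le 1 - c/n$, so $n\alpha_0^{m_n} \le ne^{-cm_n/n}\to 0$ precisely under the hypothesis $m_n/(n\log n)\to\infty$, and simultaneously $C^{[\lambda]} = O(n/m_n) = o(1/\log n)$. For non-generic samplings where $\alpha_0$ may be close to $1$, the required uniform bound on the accumulated error is obtained through a comparison argument using Lemma \ref{monotonicitym}: one sandwiches $(r_j^{(n)})$ between two perturbed increasing sequences with controlled spacings and transfers the bounds on $s_j$ back to the original. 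Finally, $\eta_j$ is defined as the supremum over large $n$ of the accumulated logarithmic error at index $j$, which is $O(1/j)$ by the Taylor expansion; monotonizing via $\eta_j \leftarrow \sup_{j' \ge j} \eta_{j'}$ produces the required decreasing sequence depending only on $(m_n)$ and tending to zero as $j \to \infty$.
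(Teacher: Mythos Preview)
Your overall architecture matches the paper's --- iterate Lemma~\ref{boundsmderivations} across the sweeps, handle the partial sweep, absorb errors into $\eta_j$ --- but the step where you control the accumulated factor $\prod_\lambda (1-(\alpha^{[\lambda]})^{m_n})^{-1}$ has a real gap. You try to track the actual maximum ratio $\alpha^{[\lambda]}$ of the evolving radii and claim it is dominated by something comparable to $\alpha_0 := \max_j r_j^{(n)}/r_{j+1}^{(n)}$ via Lemma~\ref{monotonicitym}; but that lemma gives monotonicity of the root map, not preservation of spacing ratios, so this is unjustified. Your fallback estimate $\alpha_0 \le 1 - c/n$ is not a consequence of the hypotheses of Theorem~\ref{main}, which assume only weak convergence of the empirical measure; $\alpha_0$ may be arbitrarily close to $1$, so your $\varepsilon_n = \mathcal{O}(n\alpha_0^{m_n})$ need not go to zero. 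Moreover, any $\eta_j$ built from $\alpha_0$ would depend on the sampling, contradicting the requirement that $\eta_j$ depend only on $(m_n)$.

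You do name the correct remedy in your last paragraph --- sandwich $(r_j^{(n)})$ between perturbed sequences with controlled spacing --- but this is precisely the heart of the paper's argument, and you leave it unexecuted. The paper never tracks $\alpha^{[\lambda]}$. Instead it exploits that the $\alpha$ in Lemma~\ref{boundsmderivations} is the max ratio of a freely chosen \emph{comparison} sequence $(r_j)$ with $r_j^{(n)} \le r_j$: taking $r_j = r_j^{(n)}\gamma^j$ with $\gamma = e^{3 m_n^{-1}\log n}$ forces $\alpha \le \gamma^{-1}$, and because the factors $\min\bigl(1,(j+s)/((j+s+1)(1-\alpha^m))\bigr)$ are nondecreasing in $j$, each successive upper-bound sequence again has max ratio $\le \alpha$, so the \emph{same} $\alpha$ is usable at every iteration. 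This gives $(1-\gamma^{-m_n})^{-\ell} \le (1 - n^{-3})^{-n} = 1 + \mathcal{O}(n^{-2})$, while the perturbation itself contributes $\gamma^{j+\ell} \le \gamma^n = e^{\varepsilon_n}$ with $\varepsilon_n = 3 m_n^{-1} n \log n \to 0$ exactly under $m_n/(n\log n)\to\infty$. The lower bound uses the mirror comparison $r_j = r_j^{(n)}\gamma^{j-n}$. The partial sweep is then handled more crudely than in your proposal, via the sandwich $r_j^{(n,m_n,\ell)} \le s_j \le r_{j+1}^{(n,m_n,\ell-1)}$ coming from the trivial part of Lemma~\ref{upperbound}, which costs only an additional $\mathcal{O}(1/j)$ in $\eta_j$.
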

\begin{proof}
    Iterating Lemma \ref{boundsmderivations}, 
we deduce that for $1 \leq \ell \leq n-1$, 
$$P_{n,m}^{(\ell m)} (z)
= \frac{(nm)!}{((n-\ell)m)!} 
\prod_{j=1}^{n-\ell} (z^m -  (r_j^{(n, m, \ell)} )^m )$$
where for $1 \leq j \leq n- \ell$,
$$r_j^{(n, m,\ell)} 
\leq r_{j+\ell} \prod_{s = 1}^{\ell} 
\min \left(1, \frac{j+s}{(j+s+1) (1 - \alpha^m)} \right)  
$$
as soon as $r_j^{(n)} \leq r_j$ for $1 \leq j \leq n$, $(r_j)_{1 \leq j \leq n}$ being an increasing sequence of positive reals, $\alpha$ being the maximum of $r_j/r_{j+1}$ for $1 \leq j \leq n-1$.
We deduce 
$$r_j^{(n,m,\ell)} \leq \frac{j+1}{(j+ \ell +1 )(1 - \alpha^m)^{\ell}} r_{j+\ell}.$$
For $\gamma > 1$, we can apply this 
result to $r_j = r_j^{(n)} \gamma^j$, 
in which case $\alpha \leq \gamma^{-1}$, and then 
$$r_j^{(n,m,\ell)} \leq \frac{\gamma^{j+ \ell}(j+1)}{(j+ \ell +1 )(1 - \gamma^{-m})^{\ell}}  r^{(n)}_{j+\ell}.$$
Taking, for $n \geq 2$, $\gamma = e^{ 3 m_n^{-1} \log n}$, we deduce, 
since $j+ \ell \leq n$, 
$$r_j^{(n,m_n,\ell)} \leq \frac{e^{3 m_n^{-1} n \log n}(j+1)}{(j+ \ell +1 ) \left(1 - e^{-3 \log n} \right)^{n}}  r^{(n)}_{j+\ell}.$$
 Now, 
$$(1 - e^{-3 \log n})^n \geq 1 - n e^{-3 \log n} 
= 1 - \frac{1}{n^2} \geq 1 - \frac{1}{n+1} \geq 1 - 
\frac{1}{j+\ell+1} = \frac{j+\ell}{j+\ell+1}$$
and then 
$$r_j^{(n, m_n, \ell)} \leq \frac{j+1}{j+\ell} 
e^{3 m_n^{-1} n \log n} r_{j+\ell}^{(n)}.$$
Similarly, if $r_j^{(n)} \geq r_j$
for $1 \leq j \leq n$, we get 
$$r_j^{(n,m,  \ell)} 
\geq r_{j+\ell} \prod_{s = 1}^{\ell} 
\left( 1 - \frac{1}{ \max \left(1,  j +s - 2 - \frac{2 + \log_- (m \log (\alpha^{-1}) )}{m \log (\alpha^{-1})}   \right)} \right)$$
for $1 \leq j \leq n - \ell$. 
Now, for $n \geq 3$, we take $r_j = r_j^{(n)} \gamma^{j-n}$, $\gamma = e^{3 m_n^{-1} \log n}$. 
Since $\alpha \leq \gamma^{-1}$, we get 
$$2 \leq 2 + \log_- (m_n \log (\alpha^{-1}))
\leq 2 + \log_- (m_n \log \gamma)
= 2 + \log_- (3 \log n) 
\leq 2 + \log_- (3 \log 3) = 2,$$
and 
$$m_n \log (\alpha^{-1}) \geq m_n \log \gamma
= 3 \log n \geq 3,$$
which implies 
$$j + s - 2 - \frac{2 + \log_- (m \log (\alpha^{-1}) )}{m \log (\alpha^{-1})}  
\geq j + s -3.$$
 We deduce, for $3 \leq j \leq n- \ell$, 
 $$r_j^{(n, m_n, \ell)}  
 \geq \frac{j-3}{j+\ell - 3} r^{(n)}_{j+ \ell}
 e^{-3 m_n^{-1} n \log n}. 
 $$
Now, for $1 \leq \ell \leq n-1$
and $0 \leq q \leq m-1$,
we can write 
$$P_{n,m}^{(\ell m - q)} (z) 
= \frac{(nm)!}{((n-\ell)m + q)!}
z^q \prod_{j=1}^{n-\ell}
(z^m - (r_j^{(n,m,\ell-q/m)})^m),
$$
where, from the simplest upper bound given by Lemma \ref{upperbound}, 
$$r_j^{(n,m, \ell)} \leq 
r_j^{(n,m,\ell-q/m)} \leq r_{j+1}^{(n,m,\ell-1)},$$
for $r_{j+1}^{(n,m,0)} = r_{j+1}^{(n)}$ in the case $\ell = 1$.
Hence, for $3 \leq j \leq n - \ell$, 
$0 \leq q \leq m_n-1$, 
$$\frac{j-3}{j+\ell-3}e^{-3 m_n^{-1} n \log n} 
r_{j+\ell}^{(n)} \leq r_j^{(n,m_n,\ell-q/m_n)} \leq \frac{j+2}{j+\ell} e^{3 m_n^{-1} n \log n} r_{j+\ell}^{(n)}.$$
For fixed $t \in (0,1)$ and $n$ sufficiently large, 
$$\lfloor n m_n t\rfloor = \ell m_n - q$$
for $1 \leq \ell \leq n-1$ and $0 \leq q \leq m_n-1$. Moreover, 
$$nt - 1 \leq \ell \leq nt + 1.$$
The roots of $P_{n,m_n}^{(\lfloor n m_n t \rfloor)}$ are then zero with multiplicity 
$q$, and 
$s_j e^{2 i \pi k/m_n}$
for $1 \leq j \leq n - \ell = n (1-t) + \mathcal{O}(1)$, $0 \leq k \leq m_n-1$, 
where for $j \geq 3$, 
$$\frac{j-3}{j + nt - 1} e^{-\varepsilon_n}
 r^{(n)}_{j+ \ell} \leq \frac{j-3}{j + nt - 2} e^{-\varepsilon_n}
 r^{(n)}_{j+ \ell}
\leq s_j \leq \frac{j+2}{j+ nt -1} e^{\varepsilon_n} r^{(n)}_{j+ \ell}.$$
Here, by assumption on the sequence $(m_n)_{n \geq 1}$, $\varepsilon_n := 3 m_n^{-1} n \log n$ tends to zero when $n \rightarrow \infty$. Hence, 
for $4 \leq j \leq n-\ell$, 
$$ e^{-\eta_j} \frac{j-1}{j-1 + nt} r_{j+ \ell}^{(n)} \leq  s_j \leq e^{\eta_j} \frac{j-1}{j-1 + nt} r_{j+ \ell}^{(n)} $$
where 
$$\eta_j := \log ((j-1)/(j-3)) + 
\log ((j+2)/(j-1)) 
+ \sup_{p \geq j} \varepsilon_p
$$
decreases to zero when $j$ tends to infinity. 
\end{proof}
\subsection{End of proof of Theorem \ref{main}}
We now complete the proof of Theorem \ref{main}. 
By Lemma \ref{46}, if the roots of $P_{n,m_n}^{(\lfloor n m_n t \rfloor)}$ are moved by changing $s_j$ to 
$(j-1)/(j-1 + nt) r_{j+ \ell}^{(n)}$, 
then for $4 \leq j_0 \leq n- \ell$, 
$1 \leq j_1 \leq n-1$,
a proportion of 
at most $(j_0 + j_1) m_n / (n m_n - \lfloor n m_n t\rfloor)$ of the roots are moved 
by more than $(e^{\eta_{j_0}} - 1) r^{(n)}_{n- j_1}$: here, $j_0$ corresponds to the number of radii $r_j$ for which $\eta_j$ can be
larger than $\eta_{j_0}$, and $j_1$
the number of radii for which $r_{j+\ell}^{(n)}$ can be larger than $r^{(n)}_{n- j_1}$. 
Taking, for $\varepsilon \in (0, 1/10)$, 
$j_0 = j_1 = \lfloor n (1-t) \varepsilon \rfloor$, we get for fixed $\varepsilon, t$ and $n$ large enough, $e^{\eta_{j_0}} - 1 \leq \varepsilon$, and 
$r^{(n)}_{n- j_1} \leq A + 1$
if $\nu_0$ is supported on $[0, A]$, 
because the empirical measure 
of $(r_j^{(n)})_{1 \leq j \leq n}$ tends to $\nu_0$ by assumption, and then 
$o(n)$ points among $(r_j^{(n)})_{1 \leq j \leq n}$ can be larger than $A+1$. 
For fixed $\varepsilon, t$, and for $n$ large enough, 
a proportion at most $3 \varepsilon$
of the roots are moved by more than $(A+1) \varepsilon$. Letting $\varepsilon \rightarrow 0$, we deduce that the L\'evy-Prokhorov distance between the empirical measure of 
the roots of $P_{n,m_n}^{(\lfloor n m_n t \rfloor)}$ and the points obtained from these roots by changing $s_j$ to $(j-1)/(j-1 + nt) r_{j+ \ell}^{(n)}$ tends to zero when $n \rightarrow \infty$. In order to show 
convergence of the empirical measure of the
roots of $P_{n,m_n}^{(\lfloor n m_n t \rfloor)}$, it is then enough 
to show convergence, when $n \rightarrow \infty$, 
of the measure 
$$\frac{1}{q + (n- \ell) m_{n}} \left( q \delta_0 + \sum_{j=1}^{n-\ell} \sum_{k=0}^{m_{n}-1}
\delta_{e^{2 i\pi k/m_n} r_{j+\ell}^{(n)} (j-1)/(j-1 + nt) } \right)
$$
towards $\mu_t$: 
notice that $q + (n-\ell) m_n = n m_n - \lfloor n m_n t\rfloor$ is the number of 
roots of $P_{n, m_n}^{(\lfloor n m_n t \rfloor)}$. 
We can rotate the measure by an 
angle between $0$ and $2 \pi /m_n$, keeping a L\'evy-Prokhorov distance tending to zero, since we move a proportion tending to one of the points 
by $\mathcal{O} ((A+1) /m_n)$. Averaging 
among the possible angles, it is enough to show 
convergence 
$$\frac{1}{q + (n- \ell) m_{n}}
\left(q \delta_0 + m_n \sum_{j=1}^{n-\ell}
\delta_{ r_{j+\ell}^{(n)} (j-1)/(j-1 + nt) }
\right) \otimes unif
\underset{n \rightarrow \infty}{\longrightarrow } \mu_t = \nu_t \otimes unif$$
i.e. 
$$\frac{1}{q + (n- \ell) m_{n}}
\left(q \delta_0 + m_n\sum_{j=1}^{n-\ell}
\delta_{ r_{j+\ell}^{(n)} (j-1)/(j-1 + nt) }
\right) 
\underset{n \rightarrow \infty}{\longrightarrow }  \nu_t.$$
By moving a negligible part of the measure, one deduces that it is enough to prove 
$$\frac{1}{n - \ell} 
\sum_{j=1}^{n-\ell} \delta_{ r_{j+\ell}^{(n)} (j-1)/(j-1 + nt) }\underset{n \rightarrow \infty}{\longrightarrow }  \nu_t.$$
The left-hand side is the distribution of 
$$r^{(n)}_{\ell + 1 + \lfloor (n-\ell) U \rfloor}
\frac{\lfloor (n-\ell) U \rfloor}{\lfloor (n-\ell) U \rfloor + nt}
= q_{\nu^{(n)}} \left( \frac{ \ell + 1 + \lfloor (n-\ell) U \rfloor}{n} \right)
\frac{\lfloor (n-\ell) U \rfloor}{\lfloor (n-\ell) U \rfloor + nt}
$$
where $U$ is uniformly distributed on $[0,1]$, 
and $\nu^{(n)}$ is the empirical distribution 
of $(r^{(n)}_j)_{1 \leq j \leq n}$, i.e. 
$$\nu^{(n)}  = \frac{1}{n} \sum_{j=1}^{n} \delta_{r^{(n)}_j}. $$
Since $nt - 1 \leq \ell \leq nt + 1$, 
we get for $n > 3/t$, and then $3/n < t$, 
$$ q_{\nu^{(n)}} \left(   t + (1-t)U - 3/n   \right)
 \leq q_{\nu^{(n)}} \left( \frac{ \ell + 1 + \lfloor (n-\ell) U \rfloor}{n} \right)
\leq q_{\nu^{(n)}} \left( \min(1, t + (1-t)U + 3/n)   \right). $$
Now, using L\'evy-Prokhorov distance 
and convergence of $(\nu^{(n)})_{n \geq 1}$
towards $\nu_0$, we deduce that 
for fixed $\alpha \in [0,1]$, $\varepsilon > 0$, 
and for $n$ large enough, 
$$ \nu^{(n)} ([0, q_{\nu_0} (\alpha) + \varepsilon]) + \varepsilon \geq \nu_0 ([0, q_{\nu_0} (\alpha)]) \geq \alpha, \;
\nu_0 ([0, q_{\nu^{(n)}} (\alpha) + \varepsilon]) + \varepsilon \geq \nu^{(n)} ([0, q_{\nu^{(n)}} (\alpha)]) \geq \alpha,
$$
and then 
$$ q_{\nu^{(n)}} (\max(0,\alpha - \varepsilon))
\leq q_{\nu_0} (\alpha) + \varepsilon, \; 
q_{\nu^{(n)}}(\alpha) \geq   q_{\nu_0} (\max(0,\alpha - \varepsilon)) - \varepsilon.
$$
Hence, for all $\varepsilon \in (0,t)$, 
$$\underset{n \rightarrow \infty}{\lim \inf} \, 
q_{\nu^{(n)}} \left( t + (1-t)U - 3/n   \right)
\geq \underset{n \rightarrow \infty}{\lim \inf} \, 
q_{\nu^{(n)}} \left( t + (1-t)U - \varepsilon/2  \right)
\geq   
q_{\nu_0}  \left( t + (1-t)U - \varepsilon \right)
- \varepsilon/2,
$$
and then, letting $\varepsilon \rightarrow 0$, 
$$\underset{n \rightarrow \infty}{\lim \inf} \, 
q_{\nu^{(n)}} \left( t + (1-t)U - 3/n   \right)
\geq q_{\nu_0}  \left( ( t + (1-t)U)-  \right)
$$
where $q_{\nu_0}(\alpha-)$ is the limit of $q_{\nu_0} (\beta)$ when $\beta$ tends to $\alpha$ from below. 
Similarly, if $t + (1-t)U \leq 1 - \varepsilon$, 
we get 
\begin{align*} &  \underset{n \rightarrow \infty}{\lim \sup} \, q_{\nu^{(n)}} \left( \min(1, t + (1-t)U + 3/n)   \right) \\ & \leq \underset{n \rightarrow \infty}{\lim \sup} \, q_{\nu^{(n)}} \left(   t + (1-t)U + \varepsilon/2   \right) \leq q_{\nu_0}  \left( t + (1-t)U + \varepsilon \right)
+ \varepsilon/2,
\end{align*}
and then, letting $\varepsilon \rightarrow 0$, 
we deduce that for $U <1$, and then almost surely, 
$$ \underset{n \rightarrow \infty}{\lim \sup} \, q_{\nu^{(n)}} \left( \min(1, t + (1-t)U + 3/n)   \right) \leq q_{\nu_0} \left( ( t + (1-t)U)+  \right),$$
where $q_{\nu_0}(\alpha+)$ is the limit of $q_{\nu_0} (\beta)$ when $\beta$ tends to $\alpha$ from above. 
Now, $q_{\nu_0}$ is nondecreasing and then has 
at most countably many discontinuities, which implies that almost surely, 
$$  q_{\nu_0} \left( ( t + (1-t)U)+  \right)
=  q_{\nu_0} \left( ( t + (1-t)U)-  \right)
=  q_{\nu_0} \left(  t + (1-t)U  \right).$$
Hence, almost surely,
\begin{align*} 
 q_{\nu_0} & \left(  t + (1-t)U  \right)
  =  q_{\nu_0} \left( ( t + (1-t)U)-  \right)
 \leq \underset{n \rightarrow \infty}{\lim \inf} \, 
q_{\nu^{(n)}} \left( t + (1-t)U - 3/n   \right)
\\ & \leq \underset{n \rightarrow \infty}{\lim \inf} \, q_{\nu^{(n)}} \left( \frac{ \ell + 1 + \lfloor (n-\ell) U \rfloor}{n} \right)
  \leq \underset{n \rightarrow \infty}{\lim \sup} \, q_{\nu^{(n)}} \left( \frac{ \ell + 1 + \lfloor (n-\ell) U \rfloor}{n} \right)
\\ & \leq \underset{n \rightarrow \infty}{\lim \sup} \,
q_{\nu^{(n)}} \left( \min(1, t + (1-t)U + 3/n)   \right)   \leq q_{\nu_0} \left( ( t + (1-t)U)+  \right) 
=  q_{\nu_0} \left(  t + (1-t)U  \right).
\end{align*}
Since $\ell = nt + \mathcal{O}(1)$, we also have 
$$\frac{\lfloor (n-\ell) U \rfloor}{\lfloor (n-\ell) U \rfloor + nt} \underset{n \rightarrow 
\infty}{\longrightarrow}  \frac{ (1-t)U}{t + (1-t)U},$$ 
and then 
$$ q_{\nu^{(n)}} \left( \frac{ \ell + 1 + \lfloor (n-\ell) U \rfloor}{n} \right) \frac{\lfloor (n-\ell) U \rfloor}{\lfloor (n-\ell) U \rfloor + nt}
\underset{n \rightarrow 
\infty}{\longrightarrow} 
 q_{\nu_0} \left(  t + (1-t)U  \right) \frac{ (1-t)U}{t + (1-t)U}
$$
almost surely. Letting 
$V_t := t + (1-t) U$, which is uniformly distributed 
on $[t,1]$, 
this proves convergence of 
the empirical measure of the roots of the $\lfloor n m_n t \rfloor$-th derivative of  
$P_{n,m_n}$ towards $\nu_t \otimes unif$, 
where $\nu_t$ is given by \eqref{nuV}. 

Since $x \mapsto (1-t/x) q_{\nu_0}(x)$ is nondecreasing on $[t,1]$, and 
strictly increasing on  
$[\max(t, \nu_0(\{0\})), 1]$,  
we get, for $x \in [\max(t, \nu_0(\{0\})), 1]$, 
\begin{equation}\mathbb{\nu}_t([0,(1-t/x) q_{\nu_0}(x)])
= \mathbb{P} [ (1 - t/V_t) q_{\nu_0}(V_t) 
\leq (1-t/x) q_{\nu_0}(x) ]
= \mathbb{P} [V_t \leq x] = (x-t)/(1-t),
\label{xvnu}
\end{equation}
which implies 
$$\left( 1 - \frac{t}{y} \right) 
q_{\nu_0}(y) \leq q_{\nu_t} \left( \frac{x-t}{1-t} \right)
\leq \left( 1 - \frac{t}{x} \right) 
q_{\nu_0}(x)
$$
for $\max(t, \nu_0(\{0\})) \leq y < x \leq 1$, 
and then 
$$ \left( 1 - \frac{t}{x} \right) 
q_{\nu_0}(x-) \leq q_{\nu_t} \left( \frac{x-t}{1-t} \right)
\leq \left( 1 - \frac{t}{x} \right) 
q_{\nu_0}(x)$$
for $\max(t, \nu_0(\{0\})) < x \leq 1$. 
Now, for $0 < y < x \leq 1$, 
$$\nu_0( [0, q_{\nu_0}(x-)]) 
\geq \nu_0( [0, q_{\nu_0}(y)]) \geq y,$$
and then 
$$\nu_0( [0, q_{\nu_0}(x-)]) \geq x,$$
which shows that $q_{\nu_0}$ is left-continuous on $(0,1]$, and then 
\begin{equation} q_{\nu_t} \left( \frac{x-t}{1-t} \right) 
= \left(1 - \frac{t}{x} \right) q_{\nu_0}(x)
\label{quantiles}
\end{equation}
for $x \in (\max(t, \nu_0(\{0\})), 1] $. 
If $x = \max(t, \nu_0(\{0\}))$, 
we still have \eqref{xvnu}, which implies that the left-hand side of \eqref{quantiles} is at most the right-hand side. Since the right-hand side is zero in this case, \eqref{quantiles} remains true. 
Since the quantile functions are nondecreasing, 
the two sides of \eqref{quantiles} are at most zero 
if $t \leq x <  \max(t, \nu_0(\{0\}))$, which implies that they are both equal to zero. Hence, 
\eqref{quantiles} is true for all $x \in [t, 1]$, 
which proves \eqref{eqquantiles}.

In the case where the distribution function 
of $\nu_0$ is a continuous and strictly increasing bijection from $[0,1]$ to  $[0,A]$ for some $A > 0$,
it has a reverse, continuous and strictly increasing bijection, which coincides with $q_{\nu_0}$. By \eqref{eqquantiles}, 
$q_{(1-t)\nu_t}$ is a continuous, strictly increasing bijection from $[0, 1-t]$ to $[0, A(1-t)]$, which implies that the distribution 
function of $(1-t) \nu_t$ is a continuous and  strictly increasing bijection from $[0, A(1-t)]$ to $[0, 1-t]$. We then deduce \eqref{Psi} from \eqref{eqquantiles}.

If $\nu_0$ is absolutely continuous with respect to the Lebesgue measure, supported on $[0,A]$ for some $A > 0$, with a continuous, positive density $\psi_0$ 
on $(0, A)$, it follows that its distribution function
$\Psi_0$ is an increasing bijection from $[0,A]$
to $[0,1]$. The reverse bijection, from $[0,1]$ to $[0,A]$, is $q_{\nu_0}$: this function is continuously differentiable on $(0,1)$,
with 
$$q_{\nu_0}'(\alpha) = \frac{1}{ \psi_0(q_{\nu_0}(\alpha))}$$
for $\alpha \in (0,1)$: notice that $\psi_0$ does not vanish on $(0,A)$ by assumption. 
Since for $t \in [0,1)$, $\alpha \in [0, 1-t]$, 
\begin{equation} q_t (\alpha) := q_{(1-t)\nu_t} (\alpha) = \frac{\alpha}{\alpha+t} q_0 (\alpha + t), \label{qtalpha}
\end{equation}
we deduce that $q_{t}$ is continuous, 
strictly increasing  on $[0, 1-t]$, and continuously differentiable, with strictly positive derivative, on $(0, 1-t)$: moreover, it extends, by \eqref{qtalpha}, to a continuously differentiable 
function of two variables $t$ and $\alpha$, on the domain $\alpha, t \in \mathbb{R}$, $0 < \alpha + t < 1$. 
Direct 
computation provides, on this domain, the equation:
\begin{align}\label{e:inv_Psit}
\frac{\partial}{\partial \alpha} 
q_{t} (\alpha)
= \frac{\partial}{\partial t} 
q_{t} (\alpha) + \frac{q_t (\alpha) }{\alpha}.  
\end{align}
The properties of $q_{t}$ above imply that this function is an increasing bijection 
from $[0, 1-t]$ to the interval $[0, A(1-t)]$, 
 its reverse bijection $\Psi_t$, which is the distribution function of $(1-t) \nu_t$, 
being differentiable on $(0, A(1-t))$ with strictly positive derivative. This derivative 
is a density $x \mapsto \psi(x,t)$ for the 
distribution of $(1-t) \nu_t$. 

Moreover, for $t \in [0,1)$, $x \in (0, A(1-t))$, $\Psi_t (x)$ is the unique $\alpha \in (0, 1-t)$ such that $f(t,x, \alpha)= 0$, 
where 
$$f(t,x, \alpha) := q_{t}(\alpha) - x.$$
Again using \eqref{qtalpha}, $f$ extends 
to a continuously differentiable function 
on the domain $t, x, \alpha \in \mathbb{R}$, 
$0 < t + \alpha < 1$. Since the derivative of $f$
with respect to $\alpha$ is strictly positive 
at $(t, x, \alpha)$ when $t \geq 0$, 
$\alpha \in (0, 1-t)$, implicit function 
theorem shows that the two variable function 
$(t, x) \mapsto \Psi_t (x)$ is continuously 
differentiable on the set
$$\{(t,x) \in [0,1) \times \mathbb{R}, 
x \in (0, A(1-t))\}.$$
We can then differentiate the identity $\Psi_t(q_t(\alpha)) = \alpha$ with respect to $\alpha$ and get, for $t \in [0, 1)$, $\alpha \in (0, 1-t)$, 
\begin{align*}
    \frac{\partial}{\partial \alpha} \Psi_t(q_t(\alpha)) = \psi(q_t(\alpha), t) \cdot \frac{\partial q_t(\alpha)}{\partial \alpha} = 1, 
\end{align*}
which implies 
\begin{align*}
\frac{\partial q_t(\alpha)}{\partial \alpha} = \frac{1}{\psi(q_t(\alpha), t)}.
\end{align*}
Differentiating $\Psi_t(q_t(\alpha)) = \alpha$ with respect to $t$ yields:
\begin{align*}
\frac{\partial \Psi_t}{\partial t}(q_t(\alpha)) + \psi(q_t(\alpha), t) \cdot \frac{\partial q_t(\alpha)}{\partial t} = 0,
\end{align*}
equivalently,
\begin{align*}
\frac{\partial \Psi_t}{\partial t}(q_t(\alpha)) = -\psi(q_t(\alpha), t) \cdot \frac{\partial q_t(\alpha)}{\partial t}.
\end{align*}
Substituting the expression for $\frac{\partial q_t(\alpha)}{\partial t}$ from \eqref{e:inv_Psit}, we obtain
\begin{align*}
\frac{\partial q_t(\alpha)}{\partial t} 
= \frac{\partial q_t(\alpha)}{\partial \alpha} -\frac{q_t(\alpha)}{\alpha} 
= \frac{1}{\psi(q_t(\alpha), t)} - \frac{q_t(\alpha)}{\alpha}.
\end{align*}
Thus,

\begin{align*}
\frac{\partial \Psi_t}{\partial t}(q_t(\alpha)) 
= -\psi(q_t(\alpha), t) \cdot 
\left( \frac{1}{\psi(q_t(\alpha), t)} - \frac{q_t(\alpha)}{\alpha} \right)
= -1 + \psi(q_t(\alpha), t) \cdot \frac{q_t(\alpha)}{\alpha}.
\end{align*}
Set $x = q_t(\alpha)$, so $\alpha = \Psi_t(x)$. We get, for $t \in [0, 1)$, $x \in (0, A(1-t))$, 

\begin{align}
\frac{\partial \Psi_t}{\partial t}(x) = -1 + \frac{\psi(x,t) \cdot x}{\Psi_t(x)},
\label{diffequationPsi}
\end{align}
which is equivalent to \eqref{equationPsixt}. 

Now, let us make the extra assumption that the density $\psi_0$ of $\nu_0$ is continuously differentiable on 
$(0,A)$. In this case, $\Psi_0$ is
twice continuously differentiable on $(0,A)$, 
and then $q_{\nu_0}$ is twice continuously differentiable on $(0,1)$, which implies 
that \eqref{qtalpha} defines 
a twice continuously differentiable function of 
two variables $\alpha, t \in \mathbb{R}$, 
$0 < \alpha + t < 1$. 
Applying the implicit function theorem 
as above implies that 
$(t, x) \mapsto \Psi_t (x)$ is twice continuously differentiable on 
$$\{(t,x) \in [0,1) \times \mathbb{R}, 
x \in (0, A(1-t))\},$$
and that the density $\psi$ is continuously differentiable on 
the same set. 
We can then differentiate \eqref{diffequationPsi} with respect to 
$x$, 
and get 
$$\frac{\partial^2 \Psi_t (x)}{\partial t \partial x} = \frac{\partial}{\partial x} 
\left( \frac{x \psi(x,t)}{\Psi_t (x)} \right).$$
Equivalently, 
$$
\frac{\partial \psi}{\partial t}(x,t) 
= \frac{\partial}{\partial x} \left( 
\frac{\psi(x,t)}{\frac{1}{x} \int_0^x \psi(y,t) dy} 
\right),$$
which is the PDE \eqref{PDErotationalmaintheorem}.
 \section{Discussion}\label{Discussion}

Our work is related to the precise conjecture stated by Hoskins and Kabluchko in \cite{hoskins2021dynamics}, as recalled in the introduction, inspired by O'Rourke and Steinerberger \cite{o2021nonlocal} based on a mean field approach amenable to a hydrodynamic approximation of the considered dynamics of root sets. We emphasized the role of the sampling of the initial rotationally invariant measure for defining the motion of the roots under differentiation, since the limit, if it exists, can depend on the choice of the sampling. For example, if $\nu_0$ is Dirac measure at $1$ in Theorem \ref{main}, 
we find that $\nu_t$ has density 
$$y \mapsto \frac{t}{(1-t)(1-y)^2}$$ on the interval $[0, 1-t]$, whereas a sampling with roots of unity 
gives all roots of iterated derivatives equal to zero. 
We have chosen the setting of Theorem \ref{main} in such a way that a large part of its study  reduced to a  one-dimensional dynamics with a modified differentiation operator.

We have proven the equivalent of the conjecture in \cite{hoskins2021dynamics} for specific samplings of rotationally invariant measures (defined 
with a pair of growing integers $(n,m)$ in Section \ref{3}) under a quite mild assumption on the 
relative growth between $n$ and $m$, namely 
$m/(n \log n)$ going to infinity with $n$. We expect that a more precise analysis of the setting 
can be done in order to relax this assumption. 
In particular, we conjecture that the conclusion of Theorem \ref{main} remains true for $m$ with the same order of magnitude as $n$, and maybe even under weaker assumption.
To prove that, we could take into account in our estimates of the sum \eqref{rootsS}, computed in Lemma \ref{upperbound}, that the values of the 
terms  corresponding to radii bigger and smaller than $z_k$ are of opposed signs and induce a compensation. This feature would allow to improve the bound given statement of Lemma \ref{upperbound} for values of $\alpha$ which are close to one. We leave this direction of improvement for a future work, and in the next section, we give an encouraging example.

We notice that in \cite{hoskins2021dynamics}, Hoskins and Kabluchko proved the conjecture for another class of samplings of the roots sets induced by specific expressions of coefficients of the polynomials. The format of this class of samplings is stable by differentiation, as well as the setting of the present article. 

The precise statement of the conjecture in \cite{hoskins2021dynamics} requires that the sampling of the initial rotationally invariant measure is i.i.d.: this is a strong assumption which is not stable by differentiation, hence we expect that a proof of the conjecture in this setting should be extended to more general samplings of the initial rotationally invariant distribution. It is worth noticing that in the example provided in \cite{hoskins2021dynamics} the root sets accumulated rather in a finite number of rings than on a finite number of circles. The stability of our model with points on circles can be related to the lack of noise in the distribution of the points on the circles, as shown by examples in the next section.

The article \cite{campbell2024fractional}, by Campbell, O'Rourke and Renfrew, presents an
interesting connection between fractional free convolution of Brown measures of R-diagonal operators and motion under differentiations of root sets of polynomials issued from a rotationally invariant measure on $\CC$, illustrated by examples. They give an heuristic, they call a ``formal proof'', reinforcing the conjecture we studied, but it is not a rigorous proof.

\section{Examples and prospective}\label{Examples_prospective}

In each of the following subsections, we provide a prospective question on iterated differentiations of polynomials with complex roots, illustrated with
simulations.
\subsection{The case $m \leq n$}
We consider our model of sample with $m=20$, and two values of $n$, $n=20$ and $n=40$ with an intial distribution of radii equal to $1-\frac{j^2}{n^2}$, then we compare the  radii corresponding to the polynomial after 100 differentiations, with the radii predicted by Hoskins and Kabluchko's conjecture.  Figure \ref{m=20,n=20,40with100der} shows that in both cases, the prediction is rather satisfied. It would be good to relax the assumption on $m_n$ in Theorem \ref{main} in order to cover some cases where $m \leq n$.
\begin{figure}[ht!]
  \includegraphics[width=5cm]{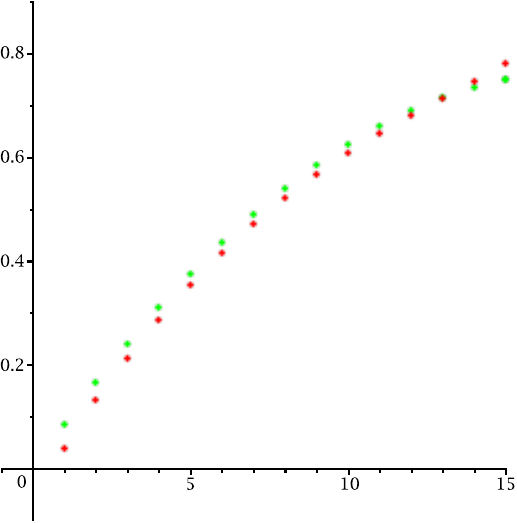}
   \includegraphics[width=5cm]{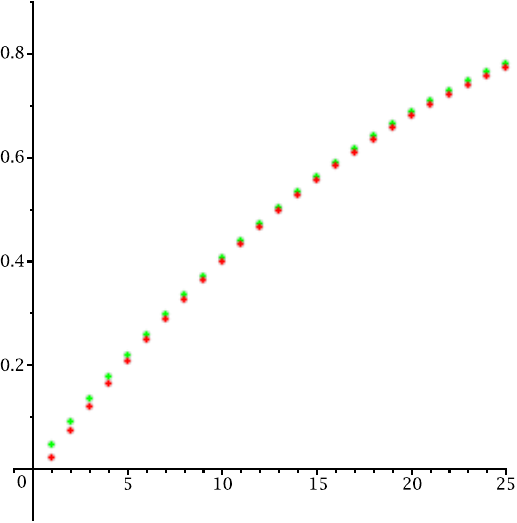}
   \caption{$m=20$, with $n=20$ at left and $n=40$ at right after 100 differentiations.}\label{m=20,n=20,40with100der}
\end{figure}    

\subsection{Very small perturbations}
We consider small angular perturbations of order $1/m$, on our model of sample with $m=n=10$, Figure \ref{m=n=10with0and30der} shows the initialy perturbed sample and the effect of 30 differentiations.
\begin{figure}[ht!]
  \includegraphics[width=5cm]{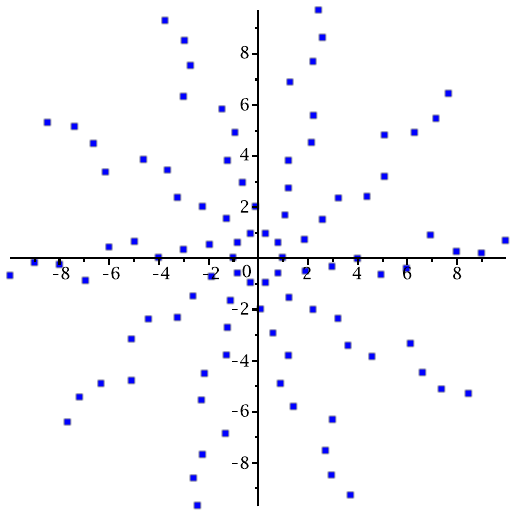}
   \includegraphics[width=5cm]{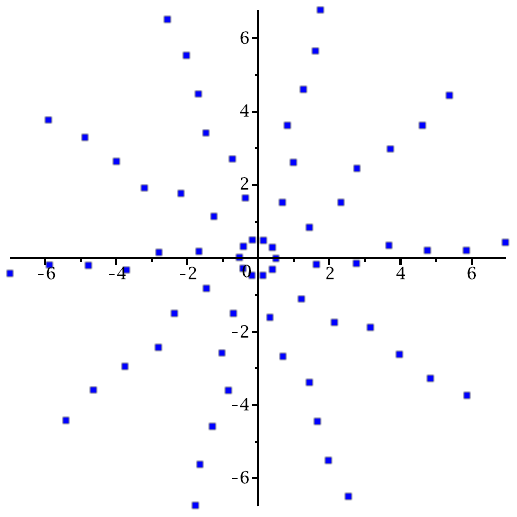}
   \caption{$m=10$, $n=10$,  0 and 30 differentiations on an initially perturbed sample.}\label{m=n=10with0and30der}
 \end{figure}   
   We notice a regularization effect of the differentiations.

\subsection{Uniform distribution on each circle}
In this subsection, we sample the uniform distribution on $n$ circles by  $n$ sets of   $m$ i.i.d. uniform random variables on each circle, so in total $mn$ random variables taking values in $[0,2\pi)$ for the argument of the points. The left of Figure \ref{m=n=20iidud} illustrates the shape of the initial root set, 
distributed on 20 concentric circles. The right of Figure \ref{m=n=20iidud} illustrates the shape of the root sets after applying 100 differentiations and then 200 differentiations. We see that the second and third  root sets are no more distributed on families of circles. We notice the decrease of the radii of the discs containing the root sets and the appearance of filaments going towards the origin.

This behavior can be compared with the one illustrated in the article  \cite{hoskins2021dynamics} for polynomials defined from their coefficients.

\begin{figure}[ht!]
  \includegraphics[width=5cm]{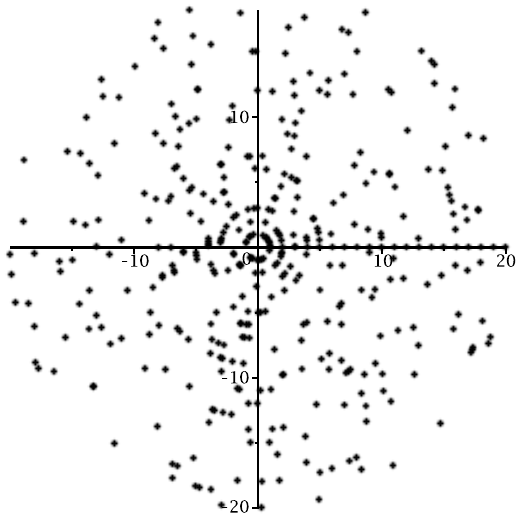}
   \includegraphics[width=5cm]{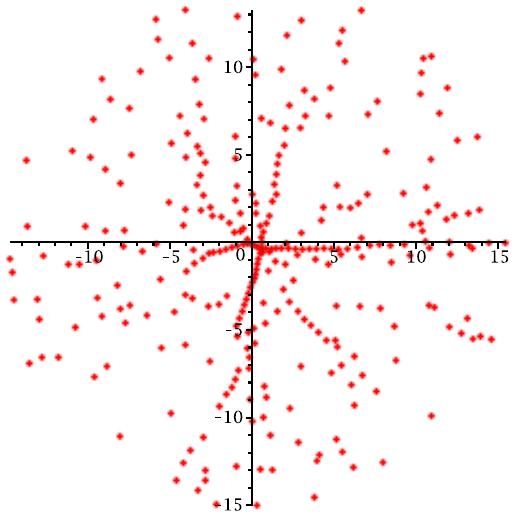}
   \includegraphics[width=5cm]{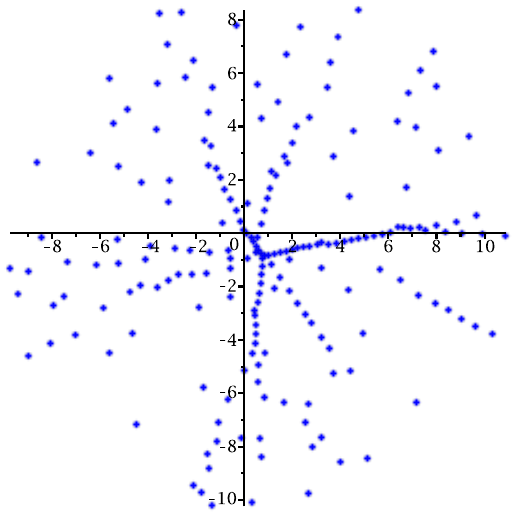}
   \caption{$m=20, n=20$, i.i.d. uniform distribution of the arguments.}\label{m=n=20iidud}
 \end{figure}

%\subsection{Large mass of $\mu_0$ near $1$ and small $m$}

\subsection{Circular derivatives}
One can also investigate the behavior of the root set of a polynomial of the form $\prod_{j=1}^n (z^m-r_{j}^m)$ , under the iterative actions of the circular derivative operator $\mathcal{D}_N= z\frac{d}{dz}-\frac{N}{2}$ which conserves the degree $N := mn$.

Again, we have a dynamics which diminishes the radii of the  more ``external'' circles and increases the radii of the  more ``internal''  circles, hence asymptotically concentrates the $N$ points on the circle of radius equal to the geometric mean $R$ of the radii, which is conserved by the operator $\mathcal{D}_N$. We illustrate this behavior with the following Figure \ref{0-2-30-200} showing the  configuration at successive times of its evolution, for $m=15, n=12$, $r_j = j$ for $1 \leq j \leq 12$, 
$R = (12!)^{1/12} \simeq 5.29$.

\begin{figure}[ht!] 
  \includegraphics[width=5cm]{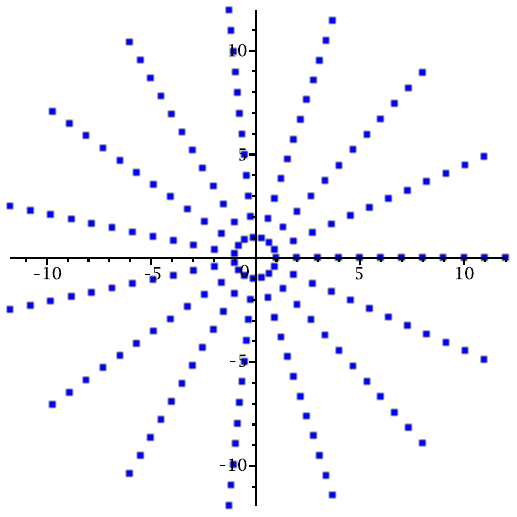}
   \includegraphics[width=5cm]{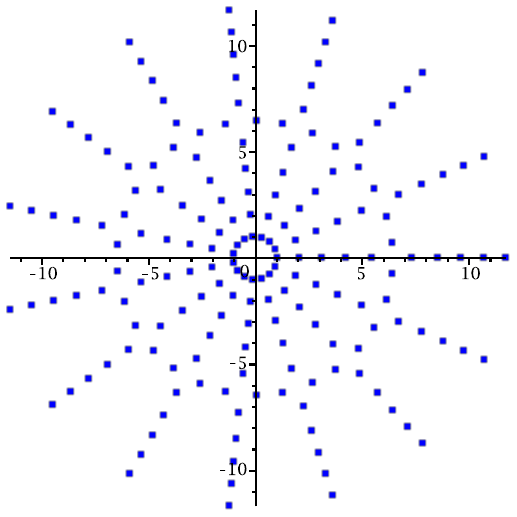}
   \includegraphics[width=5cm]{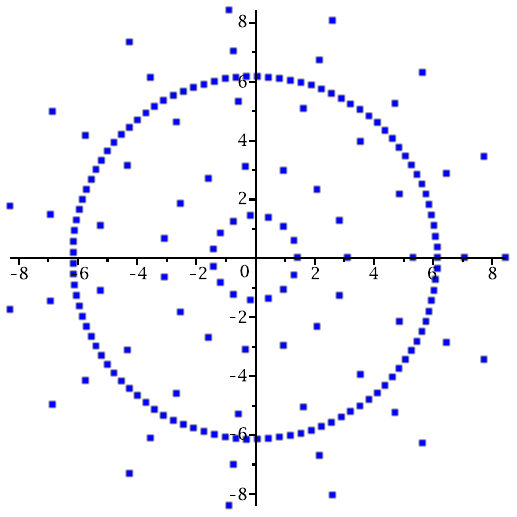}
     \includegraphics[width=5cm]{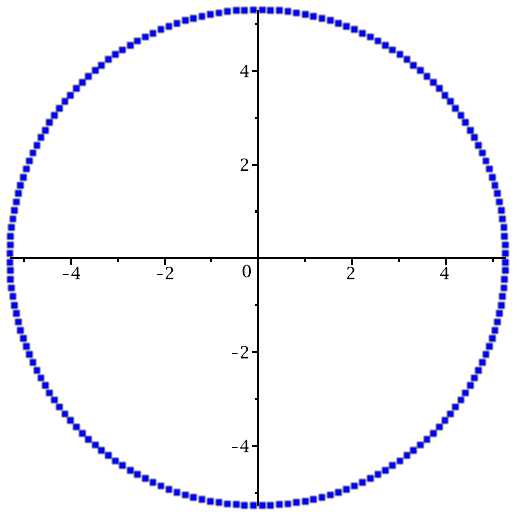}
   \caption{Action of 0, 2, 30, and 200  circular differentiations.} \label{0-2-30-200}
\end{figure}  

We notice an intricate dynamics. This is due to the fact that even if at each differentiation we have a polynomial of the form $ \prod_{j=1}^N (z^m-z_{j}^m)$ multiplied by a constant, in this setting these $z_j$ may become real negative and also non real numbers. Hence the arguments of their  $m$-th roots equal to $\frac{2 \pi  k}{m} +\theta_j$ for $0 \leq k \leq m-1$ and some $\theta_j$ as seen on the pictures. Moreover, the different ``speeds'' of the radii $r_j$ towards $R$ may induce some early collisions and the creation of circles with more than $m$ points. 

\subsection*{Acknowledgments}
AG thanks the European ERC 101054746 grant ELISA for its support (indirect costs).
%%%  
%\bibliographystyle{abbrv}
%\bibliography{GNV2}

\end{document}